%%%%%%%%%%%%%%%%%%%%%%%%%%%%%%%%%%%%%%%%%%%%%%%%%%%%%%%%%%%%%%%%%%%%%%%%%%%%%%
%%%%%%%%%%%%%%%%%%%%%%%%%%%%%%%%%%%%%%%%%%%%%%%%%%%%%%%%%%%%%%%%%%%%%%%%%%%%%%
%%                                                                          %%
%%                                              Wroc{\l}aw, September 2022  %%
%%                                                                          %%
%%                                                                          %%
%%                                                                          %%
%%                  LINEAR-TIME ALGORITHM FOR COMPUTING                     %%
%%                 THE BERNSTEIN-B\'{E}ZIER COEFFICIENTS                    %%
%%                    OF B-SPLINE BASIS FUNCTIONS                           %%
%%                                                                          %%
%%                                                                          %%
%%                     Filip Chudy, Pawe{\l} Wo\'{z}ny                      %%
%%                                                                          %%
%%                                                                          %%
%%%%%%%%%%%%%%%%%%%%%%%%%%%%%%%%%%%%%%%%%%%%%%%%%%%%%%%%%%%%%%%%%%%%%%%%%%%%%%
%%%%%%%%%%%%%%%%%%%%%%%%%%%%%%%%%%%%%%%%%%%%%%%%%%%%%%%%%%%%%%%%%%%%%%%%%%%%%%

\documentclass[final,11pt]{elsarticle}

\usepackage{verbatim,a4wide}

\usepackage{amsmath}
\usepackage{amssymb}
\usepackage{amsthm}

\usepackage{algorithm}
\usepackage{algpseudocode}

\usepackage{xcolor}

\usepackage{tikz}
\usetikzlibrary{patterns}

\usepackage{hyperref}

%\topmargin-2cm
%\textwidth19cm
%\textheight24cm
%\oddsidemargin-1cm
%\evensidemargin-1.5cm

%\topmargin-2cm
%\textwidth18cm
%\textheight23cm
%\oddsidemargin-0.75cm
%\evensidemargin-1.5cm

%%%%%%%%%%%%%%%%%%

\usepackage[cp1250]{inputenc}
\usepackage[OT4]{fontenc}
\usepackage[english]{babel}

%%%%%%%%%%%%%%%%%%

\numberwithin{equation}{section}
\numberwithin{algorithm}{section}
\numberwithin{figure}{section}
\numberwithin{table}{section}

%%%%%%%%%%%%%%%%%%

\newtheorem{theorem}{Theorem}[section]

\newtheorem{remark}[theorem]{Remark}

\newtheorem{example}[theorem]{Example}
\newtheorem{definition}[theorem]{Definition}
\newtheorem{problem}[theorem]{Problem}

\newcommand{\p}[1]{\mbox{\textsf{#1}}}

%%%%%%%%%%%%%%%%%%

%\journal{Computer Aided-Design} 

%%%%%%%%%%%%%%%%%%

\begin{document}

%%%%%%%%%%%%%%%%%%%%%%%%%%%%%%%%%%%%%%%%%%%%%%%%%%%%%%%%%%%%%%%%%%%%%%%%%%%%%%
%%%%%%%%%%%%%%%%%%%%%%%%%%%%%%%%%%%%%%%%%%%%%%%%%%%%%%%%%%%%%%%%%%%%%%%%%%%%%%
%%%%%%%%%%%%%%%%%%%%%%%%%%%%%%%%%%%%%%%%%%%%%%%%%%%%%%%%%%%%%%%%%%%%%%%%%%%%%%
%%%%%%%%%%%%%%%%%%%%%%%%%%%%%%%%%%%%%%%%%%%%%%%%%%%%%%%%%%%%%%%%%%%%%%%%%%%%%%
\begin{frontmatter}

%%%%%%%%%%
\title{Linear-time algorithm for computing the
Bernstein-B\'{e}zier coefficients of B-spline basis functions}
%%%%%%%%%%

\author{Filip Chudy\corref{cor}}
\ead{Filip.Chudy@cs.uni.wroc.pl}
\cortext[cor]{Corresponding author.}

\author{Pawe{\l} Wo\'{z}ny}
\ead{Pawel.Wozny@cs.uni.wroc.pl}

\address{Institute of Computer Science, University of Wroc{\l}aw,
         ul.~Joliot-Curie 15, 50-383 Wroc{\l}aw, Poland}

\begin{abstract}
A new differential-recurrence relation for the B-spline functions of the same
degree is proved. From this relation, a recursive method of computing the
coefficients of B-spline functions of degree $m$ in the Bernstein-B\'{e}zier 
form is derived. Its complexity is proportional to the number of coefficients in
the case of coincident boundary knots. This means that, asymptotically, the
algorithm is optimal. In other cases, the complexity is increased by at most
$O(m^3)$. When the Bernstein-B\'{e}zier coefficients of B-spline basis functions
are known, it is possible to compute any B-spline function in linear time with
respect to its degree by performing the geometric algorithm proposed recently by
the authors.
% Using a similar approach, one can also convert a $d$-dimensional
% B-spline curve of degree $m$ over one knot span to a B\'{e}zier curve in
% $O(m^2)$ time and then evaluate it in $O(md)$ time.
This algorithm scales well when evaluating the
B-spline curve at multiple points, e.g., in order to render it,
since one only needs to
find the coefficients for each knot span once. When evaluating
many B-spline curves at multiple points (as is the case when rendering
tensor product B-spline surfaces), such approach has lower computational
complexity than using the de Boor-Cox algorithm.
The numerical tests show that the new method is efficient.
The problem of finding the
coefficients of the B-spline functions in the power basis can be solved
similarly.
\end{abstract}

%%%%%%%%%%%%%%%%%%%%%%%%%%%%%%%%%%%%%%%%%%%%%%%%%%%%%%%%%%%%%%%%%%%%%%%%%%%%%%
% ToDo
%
%\begin{highlights}
%
%\item ToDo
%
%
%\end{highlights}
%%%%%%%%%%%%%%%%%%%%%%%%%%%%%%%%%%%%%%%%%%%%%%%%%%%%%%%%%%%%%%%%%%%%%%%%%%%%%%

\begin{keyword}
B-spline functions, Bernstein-B\'{e}zier form, recurrence relations,
B\'{e}zier curves, B-spline curves, B-spline surfaces, de Boor-Cox algorithm.
\end{keyword}

\end{frontmatter}
%%%%%%%%%%%%%%%%%%%%%%%%%%%%%%%%%%%%%%%%%%%%%%%%%%%%%%%%%%%%%%%%%%%%%%%%%%%%%%
%%%%%%%%%%%%%%%%%%%%%%%%%%%%%%%%%%%%%%%%%%%%%%%%%%%%%%%%%%%%%%%%%%%%%%%%%%%%%%
%%%%%%%%%%%%%%%%%%%%%%%%%%%%%%%%%%%%%%%%%%%%%%%%%%%%%%%%%%%%%%%%%%%%%%%%%%%%%%
%%%%%%%%%%%%%%%%%%%%%%%%%%%%%%%%%%%%%%%%%%%%%%%%%%%%%%%%%%%%%%%%%%%%%%%%%%%%%%

%%%%%%%%%%%%%%%%%%%%%%%%%%%%%%%%%%%%%%%%%%%%%%%%%%%%%%%%%%%%%%%%%%%%%%%%%%%%%%
%%%%%%%%%%%%%%%%%%%%%%%%%%%%%%%%%%%%%%%%%%%%%%%%%%%%%%%%%%%%%%%%%%%%%%%%%%%%%%
\section{Introduction}                                  \label{S:Introduction}
%%%%%%%%%%%%%%%%%%%%%%%%%%%%%%%%%%%%%%%%%%%%%%%%%%%%%%%%%%%%%%%%%%%%%%%%%%%%%%
%%%%%%%%%%%%%%%%%%%%%%%%%%%%%%%%%%%%%%%%%%%%%%%%%%%%%%%%%%%%%%%%%%%%%%%%%%%%%%

The family of Bernstein (basis) polynomials was used by S.~N.~Bernstein in 1912
in his constructive proof of the Weierstrass approximation theorem.
For more details, see~\cite[\S10.3]{DD02}.
Half a century later, they came into prominence when they became
a basis for a particular family of parametric curves
--- \textit{B\'{e}zier curves}.

For a fixed $n \in \mathbb{N}$ and $0\leq i\leq n$, $B^n_i$ is the
\textit{$i$th Bernstein (basis) polynomial of degree $n$} given by the formula
\begin{equation}\label{E:Def_BernPoly}
 B^n_i(t) := \binom{n}{i} t^i (1-t)^{n-i}.
\end{equation}

\begin{remark}
In the sequel, the convention is applied that $B^n_i\equiv 0$ if $i<0$ or $i>n$. 
\end{remark}

The Bernstein polynomial $B^n_i$ $(0\leq i\leq n)$ is non-negative and has
exactly one maximum value in the interval $[0,1]$ (except for the case $n=0$). It
is also important that all Bernstein polynomials of the same degree give the
partition of unity, i.e., $\sum_{i=0}^{n}B^n_i(t)\equiv 1$. These properties
make the Bernstein basis a very powerful tool in approximation theory,
numerical analysis or in CAGD.

For any $t \in \mathbb{R}$, Bernstein polynomials satisfy the recurrence
relations connecting the polynomials of two consecutive degrees:
\begin{eqnarray}
\label{E:BernDegDown}
B^n_k(t) & = & t B^{n-1}_{k-1}(t)
          + (1-t) B^{n-1}_k(t),\\
\label{E:BernDegUp}
B^n_k(t) & = & \dfrac{n-k+1}{n+1} B^{n+1}_{k}(t)
          + \dfrac{k+1}{n+1} B^{n+1}_{k+1}(t) \qquad (0\leq k\leq n).
\end{eqnarray}
It is also well-known that
\begin{equation}\label{E:BernsteinDerivSimple}
 \Big(B^n_k(t)\Big)' = n \Big(B^{n-1}_{k-1}(t) - B^{n-1}_k(t) \Big) 
                                                        \qquad (0\leq k\leq n).
\end{equation}
See~\cite[\S5.1, Properties 2, 11, 13]{Farouki2012}.

Using Eqs.~\eqref{E:Def_BernPoly}, \eqref{E:BernDegUp},
and \eqref{E:BernsteinDerivSimple}, one can easily obtain the following identities:
\begin{eqnarray} 
\label{E:BernsteinuDeriv}
 t \Big(B^n_k(t)\Big)' & = & k B^n_k(t) - (k+1) B^n_{k+1}(t),\\
\label{E:BernsteinDeriv}
 \Big(B^n_k(t)\Big)' & = & (n-k+1) B^n_{k-1}(t) + (2k - n) B^n_k(t)
                     - (k+1) B^n_{k+1}(t),
\end{eqnarray}
where $k=0,1,\ldots,n$.

\begin{remark}
 Points in $\mathbb{E}^d$, as well as parametric objects
 which return a point in $\mathbb{E}^d$, are denoted using
 the upper-case letters in the sans-serif font, e.g.,
 $\p{S}, \p{P}, \ldots$ (as opposed to, e.g., Bernstein polynomials $B^n_i$,
 which return real values).
\end{remark}

Polynomial B\'{e}zier curves are a particular family of parametric curves which
is defined as a convex combination of control points. The points are weighted
using Bernstein polynomials~\eqref{E:Def_BernPoly}. A \textit{B\'{e}zier curve}
$\p{P}_n: [0, 1] \rightarrow \mathbb{E}^d$ \textit{of degree $n$} with
\textit{control points} $\p{W}_0, \p{W}_1, \ldots, \p{W}_n \in \mathbb{E}^d$ is
defined by the formula
\begin{equation}\label{E:BezierCurve}
  \p{P}_n(t) := \sum_{k=0}^n B^n_k(t) \p{W}_k\qquad (0\leq t\leq 1).
\end{equation}
Due to the properties of Bernstein polynomials,
the curve $\p{P}_n$ is in the \textit{convex hull} of the control
points which means that 
$\p{P}_n([0,1])\subseteq\mbox{conv}\{\p{W}_0,\p{W}_1,\ldots,\p{W}_n\}$.

One can evaluate a point on a B\'{e}zier curve using the famous de Casteljau
algorithm which is based on the relation~\eqref{E:BernDegDown} and has $O(dn^2)$
complexity. % It is a classic result, covered extensively in literature (see,
% e.g., \cite{BM99} or \cite{Farin2002}).

A new method for computing a point on a polynomial or rational B\'{e}zier curve
in optimal $O(dn)$ time has been recently proposed by the authors
in~\cite{WCh2020}. The new algorithm combines the qualities of previously known
methods for solving this problem, i.e., the linear complexity of the Horner's
scheme and the geometric interpretation, the convex hull property, and operating
only on convex combinations which are the advantages of the de Casteljau
algorithm. The new method can be used not only for polynomial and
rational B\'{e}zier curves but also for other \textit{rational parametric
objects}, e.g., B\'{e}zier surfaces.
For more details, see~\cite[Chapter~2]{FChPhD}.

For more information about the history
of B\'{e}zier curves and their properties, see, e.g.,
\cite{Bezier66, Bezier67, Bezier68, BM99, FChPhD, DC63, DC59, DC99}, as well as
\cite[\S1]{Farin2002} and \cite[\S4]{Farouki2012}.

Despite their elegance and some desirable properties, Bernstein polynomials
have a significant drawback. For any $n, i \in \mathbb{N}$ such that 
$0\leq i \leq n$, the value of a Bernstein polynomial $B^n_i(t)$ is non-zero for
all $t\in(0, 1)$. In practice, when operating on a B\'{e}zier 
curve~\eqref{E:BezierCurve}, any change in one control point's position changes
the curve over its whole length.

To address this issue, \textit{B-spline functions} can be used. They are
constructed in a way which eliminates this drawback. When used as a basis for
parametric curves (known as \textit{B-spline curves}), any change to
the control point only has a local effect on a curve.

In order to introduce a B-spline function, the \textit{generalized divided
differences} will be used.
\begin{definition}[{\cite[\S4.2.1]{DB}}]
The {\rm generalized divided difference} of a univariate function $f$ at the
knots $x_i, x_{i+1}, \ldots, x_k$ (which may be coincident), denoted by
$[x_i,x_{i+1},\ldots,x_k]f$, is defined in the following recursive way:
\begin{equation*}
[x_i,x_{i+1},\ldots,x_{i+\ell}]f := 
  \left\{
   \begin{array}{ll}
   \dfrac{[x_{i+1},\ldots,x_{i+\ell}]f-[x_i,\ldots,x_{i+\ell-1}]f}
          {x_{i+\ell}-x_i} & (x_i \neq x_{i+\ell}),\\
   \dfrac{f^{(\ell)}(x_i)}{\ell!} & (x_i = \ldots = x_{i+\ell}).
  \end{array}
  \right.
\end{equation*}
In particular, $[x_i]f = \dfrac{f^{(0)}(x_i)}{0!} = f(x_i)$.
\end{definition}

\begin{definition}[{\cite[\S5.11]{Prautzsch}}]\label{D:BSplineDivDiff}
The {\rm B-spline (basis) function $N_{mi}$ of degree $m\in\mathbb N$} with knots
$t_i \leq t_{i+1} \leq \ldots \leq t_{i+m+1}$ is defined as
$$
N_{mi}(u) := (t_{i+m+1}-t_i) [t_i, t_{i+1}, \ldots, t_{i+m+1}](t-u)^m_+,
$$
where the generalized divided difference acts on the variable $t$, and
$$
(x-c)^m_+ := \left\{\begin{array}{ll}
                    (x-c)^m & (x \geq c),\\
                    0       & (x < c)
                   \end{array}
\right.
$$
is the {\rm truncated power function}.
\end{definition}

The B-spline function $N_{mi}$ with knots $t_i \leq t_{i+1} \leq \cdots \leq
t_{m+i+1}$ has support $[t_i, t_{m+i+1}]$, i.e., $N_{mi}(u)$ can be non-zero 
only for $u \in [t_i, t_{m+i+1}]$ (see, e.g., \cite[Property 2.2]{NURBS}).

Let $m,n \in \mathbb{N}$. The knots
$$
 \underbrace{t_{-m} \leq \ldots \leq t_{-1} \leq t_0}_\text{boundary knots}
 \leq
 \underbrace{t_1 \leq \ldots \leq t_{n-1}}_\text{inner knots}
 \leq
 \underbrace{t_n \leq t_{n+1} \leq \ldots \leq t_{n+m}}_\text{boundary knots},
$$
where $t_0 < t_n$, serve as a support for
a B-spline basis of degree $m$ over $[t_0, t_n]$. The B-spline functions
$N_{m,-m}, N_{m,-m+1}, \ldots, N_{m,n-1}$ 
(cf.~Definition~\ref{D:BSplineDivDiff}) form a B-spline basis.
Splines are commonly used in a wide variety of applications, e.g., in
computer-aided geometric design, approximation theory and numerical analysis.
See, e.g., \cite{Dierckx93, Farin2002, Goldman, NURBS, Prautzsch}.

\begin{remark}\label{R:BSplineConvention}
In the sequel, a convention is adopted that for any quantity $Q$, if 
$t_k = t_{m+k+1}$ then $\dfrac{Q}{t_{m+k+1}-t_{k}}:=0$, as well as that
$N_{mi}\equiv0$ for $i<-m$ or $i\geq n$.
\end{remark}

Computing the B-spline functions or their derivatives using 
Definition~\ref{D:BSplineDivDiff} is costly. Instead, one
can use the recurrence and differential-recurrence relations.

The B-spline functions satisfy the following de Boor-Mansfield-Cox 
recursion formula (see, e.g., \cite[Eq.~(7.8)]{Goldman}, \cite[\S2]{DeBoor1972},
\cite[Eq.~(6.1)]{Cox1972}):
\begin{equation}\label{E:BSplineRec}
N_{mi}(u) = 
  (u - t_i) \dfrac{N_{m-1, i}(u)}{t_{m+i} - t_i} 
        + (t_{m+i+1} - u) \dfrac{N_{m-1, i+1}(u)}{t_{m+i+1} - t_{i+1}}
                                                           \qquad(-m\leq i<n)
\end{equation}
(cf.~Remark~\ref{R:BSplineConvention}).
Additionally, for $i=0,1,\ldots,n-1$,
\begin{equation}\label{E:BSplineN0iExplicit}
 N_{0i}(u) = 
\left\{
\begin{array}{ll}
 1 & (u \in [t_i, t_{i+1})),\\
 0 & \mbox{otherwise}.
\end{array}
\right.
\end{equation}
The derivative of a B-spline function can be expressed as
\begin{equation}\label{E:BSplineDeriv}
N_{mi}'(u) = 
  m \left(\dfrac{N_{m-1, i}(u)}{t_{m+i} - t_i}
        - \dfrac{N_{m-1, i+1}(u)}{t_{m+i+1} - t_{i+1}}\right)
                                                     \qquad(-m\leq i<n)
\end{equation}
(cf.~\cite[Eq.~(2.7)]{NURBS} and Remark~\ref{R:BSplineConvention}).
%(cf.~Remark~\ref{R:BSplineConvention}).

\begin{theorem}[{\cite[Property 2.5]{NURBS}}]\label{T:BSplineMultiDeriv}
All derivatives of $N_{mi}$ exist in the interior of a knot span (where it is 
a polynomial). At a knot $N_{mi}$ is $m-k$ times continuously differentiable,
where $k$ is the multiplicity of the knot. Hence, increasing $m$ increases
continuity, and increasing knot multiplicity decreases continuity.
\end{theorem}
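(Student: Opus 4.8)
The plan is to argue directly from Definition~\ref{D:BSplineDivDiff}, splitting the statement into the (easy) interior claim and the (harder) knot claim. For the interior, fix $u$ in the open interior of a knot span. Each expression of the form $(t_r-u)^m_+$, with $t_r$ one of the knots $t_i,\ldots,t_{i+m+1}$, equals either $(t_r-u)^m$ or $0$ (since $u$ avoids every knot), hence is a polynomial in $u$; the same holds for its $t$-derivatives, which is all that the generalized divided difference can produce when knots coincide. Therefore $N_{mi}$ restricted to an open knot span is a polynomial in $u$ and is consequently $C^\infty$ there, so all its derivatives exist.

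For the behaviour at a knot, first I would rewrite the generalized divided difference in its Hermite form. Grouping the knots $t_i,\ldots,t_{i+m+1}$ by their distinct values $\tau_1<\tau_2<\cdots$ with multiplicities $k_1,k_2,\ldots$, the divided difference of a sufficiently smooth function $g$ is a fixed linear combination
\begin{equation*}
[t_i,\ldots,t_{i+m+1}]\,g \;=\; \sum_{r}\sum_{j=0}^{k_r-1} c_{r,j}\, g^{(j)}(\tau_r),
\end{equation*}
with coefficients $c_{r,j}$ depending only on the knots and with top coefficient $c_{r,k_r-1}\neq0$. Applying this to $g(t)=(t-u)^m_+$ and using
\begin{equation*}
\frac{\partial^{\,j}}{\partial t^{\,j}}(t-u)^m_+ \;=\; \frac{m!}{(m-j)!}\,(t-u)^{m-j}_+ \qquad (0\le j\le m),
\end{equation*}
expresses $N_{mi}(u)$ as a linear combination, with constant coefficients, of truncated powers $(\tau_r-u)^{m-j}_+$ for $j=0,\ldots,k_r-1$.

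Now I would localize at a single knot value $\tau=\tau_r$ of multiplicity $k=k_r$. Every summand whose breakpoint is some $\tau_s\neq\tau$ is $C^\infty$ on a neighbourhood of $u=\tau$, so only the terms $(\tau-u)^{m-j}_+$ with $j=0,\ldots,k-1$ govern the local regularity. An elementary computation shows that $(\tau-u)^p_+$, viewed as a function of $u$, lies in $C^{p-1}$ but not in $C^{p}$ at $u=\tau$ (its $p$-th derivative has a nonzero jump there). The roughest contribution thus comes from $j=k-1$, where $p=m-k+1$, giving class $C^{m-k}$; every other term is at least this smooth, so $N_{mi}\in C^{m-k}$, which is the asserted lower bound. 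Since moreover $c_{r,k-1}\neq0$, the jump in the $(m-k+1)$-st derivative is not cancelled, so this is sharp. Both monotonicity consequences follow at once: raising $m$ with $k$ fixed raises $m-k$, while raising $k$ with $m$ fixed lowers it. The maximal case $k=m+1$ (where $p=0$ and $C^{-1}$ is read as an admissible jump discontinuity) is covered by the same bookkeeping.

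The main obstacle is the Hermite expansion of the generalized divided difference at coincident knots, together with the nonvanishing of the top coefficient $c_{r,k_r-1}$; once this standard structural fact is in place, the regularity count reduces to the elementary smoothness of a single truncated power. An alternative is induction on $m$ using the recurrence~\eqref{E:BSplineRec} and the derivative formula~\eqref{E:BSplineDeriv}: the latter immediately gives that $N_{mi}'$ inherits class $C^{m-1-k}$ from the degree-$(m-1)$ functions, while~\eqref{E:BSplineRec} furnishes continuity; the delicate point there is verifying $C^0$ continuity at multiplicity $k=m$, where the degree-$(m-1)$ splines may already jump and one must check that their jumps cancel.
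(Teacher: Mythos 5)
Your proposal is correct, but there is nothing in the paper to compare it against: the paper states this theorem as an imported fact, quoted from \cite[Property~2.5]{NURBS}, and gives no proof of its own. What you have reconstructed is the classical divided-difference proof (essentially the Curry--Schoenberg argument): write the confluent divided difference in Hermite form, apply it to the truncated power, and read off the regularity at each distinct knot from the single roughest term $(\tau-u)^{m-k+1}_+$. The one ``standard structural fact'' you lean on can be discharged explicitly: the top coefficient is $c_{r,k_r-1}=\bigl((k_r-1)!\,\prod_{s\neq r}(\tau_r-\tau_s)^{k_s}\bigr)^{-1}$, manifestly nonzero, which also validates your sharpness claim since the terms with $j<k-1$ are strictly smoother and cannot cancel the jump in the $(m-k+1)$-st derivative. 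Two small points deserve a sentence in a full write-up: (i) when a knot has full multiplicity $k_r=m+1$, the Hermite form calls for $g^{(m)}(\tau_r)$ with $g(t)=(t-u)^m_+$, whose $m$-th $t$-derivative jumps at $t=u$; this is harmless for $u\neq\tau_r$ and is resolved at $u=\tau_r$ by the one-sided convention built into Definition~\ref{D:BSplineDivDiff} and the truncated power, but it should be said explicitly; (ii) your conclusion is in fact slightly stronger than the statement ($N_{mi}$ is $C^{m-k}$ but not $C^{m-k+1}$), which is worth recording. Your alternative inductive route via \eqref{E:BSplineRec} and \eqref{E:BSplineDeriv} is also viable, and you correctly identify its delicate point --- verifying $C^0$ continuity at a knot of multiplicity $m$, where the degree-$(m-1)$ functions individually jump and a cancellation must be checked; the truncated-power route avoids that cancellation argument entirely, which is precisely what it buys you.
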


The B-spline functions, like the family of Bernstein polynomials of an arbitrary
degree, have properties which make them a good choice for a parameterization of 
a family of curves.

\begin{theorem}[{\cite[Properties 2.3, 2.4, 2.6]{NURBS}}]
                                                     \label{T:BSplineNonNegSum1}
$N_{mi}(u) \geq 0$ for all $m, i, u$ (non-negativity). For an arbitrary knot
span, $[t_j, t_{j+1})$, $\sum_{i=j-m}^j N_{mi}(u) = 1$ for all 
$u \in [t_j, t_{j+1})$ (partition of unity). Except for the case $m = 0$,
$N_{mi}$ attains exactly one maximum value.
\end{theorem}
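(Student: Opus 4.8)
The plan is to prove the three assertions separately, getting non-negativity and the partition of unity by induction on the degree $m$ from the recurrence \eqref{E:BSplineRec}, and treating the unimodality claim, which I expect to be the main obstacle, by a separate sign-change argument based on the derivative formula \eqref{E:BSplineDeriv}.

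For non-negativity I would induct on $m$. The base case $m=0$ is immediate from \eqref{E:BSplineN0iExplicit}. For the step, recall that $N_{m-1,i}$ is supported on $[t_i,t_{m+i}]$ and $N_{m-1,i+1}$ on $[t_{i+1},t_{m+i+1}]$. On the support of $N_{m-1,i}$ one has $u\geq t_i$, so the factor $(u-t_i)/(t_{m+i}-t_i)$ is non-negative (coincident knots being covered by Remark~\ref{R:BSplineConvention}); likewise $(t_{m+i+1}-u)/(t_{m+i+1}-t_{i+1})\geq 0$ on the support of $N_{m-1,i+1}$. Thus both summands in \eqref{E:BSplineRec} are products of non-negative quantities, and $N_{mi}(u)\geq 0$.

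For the partition of unity I would again induct on $m$, the case $m=0$ being clear since on $[t_j,t_{j+1})$ only $N_{0j}\equiv 1$ survives. For the step, substitute \eqref{E:BSplineRec} into $\sum_{i=j-m}^{j}N_{mi}$ and collect the coefficient of each $N_{m-1,k}(u)$. A given $N_{m-1,k}$ picks up $(u-t_k)/(t_{m+k}-t_k)$ from the term with $i=k$ of the first summand and $(t_{m+k}-u)/(t_{m+k}-t_k)$ from the term with $i=k-1$ of the second summand, and these add to $1$. The two extreme contributions, $N_{m-1,j-m}$ and $N_{m-1,j+1}$, are supported outside $[t_j,t_{j+1})$ and vanish there, so what remains is exactly $\sum_{k=j-(m-1)}^{j}N_{m-1,k}(u)$, which equals $1$ by the inductive hypothesis.

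The unimodality claim is the substantive part. Here I would read \eqref{E:BSplineDeriv} as $N_{mi}'=m(g-h)$ with $g:=N_{m-1,i}/(t_{m+i}-t_i)\geq 0$ supported on $[t_i,t_{m+i}]$ and $h:=N_{m-1,i+1}/(t_{m+i+1}-t_{i+1})\geq 0$ supported on $[t_{i+1},t_{m+i+1}]$. Just to the right of $t_i$ only $g$ is positive, so $N_{mi}'>0$; just to the left of $t_{m+i+1}$ only $h$ is positive, so $N_{mi}'<0$. Hence it suffices to show that $N_{mi}'$ changes sign exactly once on the interior of the support, from $+$ to $-$. The cleanest route is to note that $N_{mi}'$ is itself a spline of degree $m-1$ whose coefficient sequence in the basis $\{N_{m-1,k}\}$ is zero except for a single $+$ at index $i$ and a single $-$ at index $i+1$, i.e.\ it has exactly one sign change; the variation-diminishing property of the B-spline basis then forces $N_{mi}'$ to have at most one sign change as a function, which combined with the boundary signs above is exactly one, from $+$ to $-$, so $N_{mi}$ rises and then falls and attains its maximum at a single point. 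If one prefers a self-contained argument, the same conclusion follows by showing the ratio $g/h$ is strictly decreasing on the overlap $(t_{i+1},t_{m+i})$, so that $g-h=h(g/h-1)$ vanishes once; this can be reduced by a secondary induction on $m$, again via \eqref{E:BSplineRec}, to the monotonicity of ratios of lower-degree B-splines. The hard part is precisely this monotonicity/variation-diminishing input; the remaining delicate points are the one-sided reading of the derivative at interior knots, where $N_{mi}$ is only finitely differentiable (Theorem~\ref{T:BSplineMultiDeriv}), and the degenerate configurations with coincident knots, which are absorbed throughout by the convention of Remark~\ref{R:BSplineConvention}.
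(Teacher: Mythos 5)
The paper itself offers no proof of this theorem: it is imported verbatim from the literature (Piegl--Tiller, Properties 2.3, 2.4, 2.6), so there is no internal argument to compare against, and your proposal must stand on its own. Your first two parts do: the induction on $m$ for non-negativity via \eqref{E:BSplineRec}, and the telescoping induction for the partition of unity --- including the observation that the boundary contributions $N_{m-1,j-m}$ and $N_{m-1,j+1}$ vanish on $[t_j,t_{j+1})$ --- are the standard textbook arguments and are correct as written.

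The unimodality part has a genuine gap at coincident knots, which you wave at but do not fix. Your boundary-sign claims presuppose $t_i<t_{i+1}$ and $t_{i+m}<t_{i+m+1}$. In precisely the clamped configurations this paper uses throughout (cf.~\eqref{E:KnotClones}), e.g.\ $N_{m,-m}$ with $t_{-m}=\cdots=t_0<t_1$, the function is monotone decreasing on its support, so ``just to the right of $t_i$ only $g$ is positive, hence $N_{mi}'>0$'' is simply false there: Remark~\ref{R:BSplineConvention} does not absorb this case --- on the contrary, when $t_i=t_{m+i}$ the convention deletes the $g$-term entirely and reverses your claimed sign, the coefficient sequence of $N_{mi}'$ then has zero sign changes rather than one, and the maximum sits at an endpoint of the support. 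The repair is routine (allow the increasing or decreasing region to be empty and conclude an endpoint maximum), but as stated the step fails, and the concluding sentence ``exactly one, from $+$ to $-$'' is wrong in these cases. Separately, the entire argument hinges on the variation-diminishing property, a substantial theorem you neither prove nor reduce to anything available in the paper; citing it is legitimate (it is comparable to what the literature does for this property), but then your proof is not self-contained, and the proposed self-contained alternative --- strict monotonicity of $g/h$ on the overlap, ``reduced by a secondary induction'' --- is exactly the total-positivity content you were trying to avoid and is only asserted, not shown. Finally, ``at most one sign change of $N_{mi}'$'' does not by itself rule out $N_{mi}'$ vanishing on a whole interior knot span, so passing from one sign change to ``maximum attained at a single point'' deserves a line (e.g.\ via strict positivity of $N_{mi}$ on the interior of its support and the exact degree of the polynomial pieces).
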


A \textit{B-spline curve of degree $m$} over the non-empty interval $[a, b]$
with knots 
$$
t_{-m} \leq \ldots \leq t_0 = a \leq t_1 \leq \ldots \leq b 
                                                = t_n \leq \ldots \leq t_{n+m}
$$ 
and \textit{control points}
$\p{W}_{-m}, \p{W}_{-m+1}, \ldots, \p{W}_{n-1}\in{\mathbb E^d}$ is defined as
$$
\p{S}(t) := \sum_{i=-m}^{n-1} N_{mi}(t) \p{W}_i \qquad (t \in [a, b]).
$$
One can check that
$\p{S}([a,b])\subseteq\mbox{conv}\{\p{W}_{-m},\p{W}_{-m+1},\ldots,\p{W}_{n-1}\}$,
as well as
$\p{S}([t_i,t_{i+1}])\subseteq
                      \mbox{conv}\{\p{W}_{i-m},\p{W}_{i-m+1},\ldots,\p{W}_i\}$ 
for $0\leq i\leq n-1$ (see, e.g., \cite[Property 3.5]{NURBS}).

The recurrence relation~\eqref{E:BSplineRec} and Eq.~\eqref{E:BSplineN0iExplicit}
can be used to evaluate a point on a B-spline curve. This approach, applied to
explicitly compute the values of B-spline functions, has been proposed by de
Boor in~\cite[p.~55--57]{DeBoor1972}. The algorithm given
in~\cite[p.~57--59]{DeBoor1972} (see also, e.g,~\cite[Eq.~(8.3)]{Farin2002}),
which directly computes a point on a B-spline curve is known as the de Boor-Cox
algorithm and has $O(dm^2)$ computational complexity.

A popular choice for the boundary knots is to make them \textit{coincident} with
$t_0$ and $t_n$, i.e.,
\begin{equation}\label{E:KnotClones}
t_{-m} = t_{-m+1} = \ldots = t_{-1} = t_0 = a,\quad 
                                          b=t_n = t_{n+1} = \ldots = t_{n+m}.
\end{equation}
In this case, $\p{S}(a) = \p{W}_{-m}$ and $\p{S}(b) = \p{W}_{n-1}$.

B\'{e}zier curves are a particular
subtype of B-spline curves for $n=1$,
$t_{-m} = t_{-m + 1} = \ldots = t_0 = 0$ and $t_1 = t_2 = \ldots = t_{m+1} = 1$,
$$
\p{S}(t)=\sum_{i=-m}^0N_{mi}(t)\p{W}_i=\sum_{i=0}^mB^m_i(t)\p{W}_{i-m}
                                                          \qquad (t \in [0, 1]).
$$
See, e.g., \cite[Property 3.1]{NURBS}.

The paper is organized as follows.
Section~\ref{S:BB-P-Coeff-BplineF} comprises the problem statement
for finding the adjusted Bernstein-B\'{e}zier coefficients
of B-spline functions over each knot span.
In Section~\ref{S:BSplineDiffRec} we prove
the new differential-recurrence relation between the B-spline functions of the
same degree. It will be the foundation for new recurrence relations which
can be used to formulate an algorithm which computes the required coefficients.
In Section~\ref{S:BSplineBezier}, the algorithm for finding
the adjusted Bernstein-B\'{e}zier coefficients of all B-spline functions
over each knot span
in the case
$t_{-m} = t_0$, $t_n = t_{n+m}$ and all inner knots
$t_1, t_2, \ldots, t_{n-1}$ having multiplicity
$1$ is given. The computational complexity of the method is $O(nm^2)$. This means
that, asymptotically, the algorithm is optimal. Section~\ref{S:Applications}
expands upon using the new algorithm to compute multiple points on multiple
B-spline curves (\S\ref{SS:BSplineFastComp}).
The numerical tests which are presented there show that the new method
is efficient.
In \S\ref{SS:SurfaceFastComp}, the results of applying the new algorithm
to evaluating tensor product B-spline surfaces are given.
The assumptions about knot multiplicity which were made in
Section~\ref{S:BSplineBezier} are then relaxed in Section~\ref{S:BSplineGen} to
cover all cases (cf.~Remark~\ref{R:BSplineAssumptions}).
Section~\ref{S:UniformKnots} gives a simplified version of
the recurrence relations for the case of uniform knots.

%%%%%%%%%%%%%%%%%%%%%%%%%%%%%%%%%%%%%%%%%%%%%%%%%%%%%%%%%%%%%%%%%%%%%%%%%%%%%%
%%%%%%%%%%%%%%%%%%%%%%%%%%%%%%%%%%%%%%%%%%%%%%%%%%%%%%%%%%%%%%%%%%%%%%%%%%%%%%
\section{The Problem: Bernstein-B\'{e}zier and power coefficients of B-spline
                                                                    functions}                                 
                                                  \label{S:BB-P-Coeff-BplineF}
%%%%%%%%%%%%%%%%%%%%%%%%%%%%%%%%%%%%%%%%%%%%%%%%%%%%%%%%%%%%%%%%%%%%%%%%%%%%%%
%%%%%%%%%%%%%%%%%%%%%%%%%%%%%%%%%%%%%%%%%%%%%%%%%%%%%%%%%%%%%%%%%%%%%%%%%%%%%%

In this paper, the following problem is considered.

Let the \textit{adjusted Bernstein-B\'{e}zier basis form} of the B-spline 
function $N_{mi}$ over a single non-empty \textit{knot span} 
$[t_j, t_{j+1}) \subset [t_0, t_n]$ ($j=i,i+1,\ldots,i+m$) be
\begin{equation}\label{E:BSplineABB}
N_{mi}(u) = \sum_{k=0}^m b^{(i,j)}_k B^m_k\Big(\dfrac{u - t_j}
                                                     {t_{j+1} - t_j}\Big)
                                                \qquad (t_j \leq u < t_{j+1}),                                                     
\end{equation}
with $b_k^{(i,j)} \equiv b_{k,m}^{(i,j)}$. 

\begin{problem}\label{P:BSpline1}
Find the adjusted Bernstein-B\'{e}zier basis coefficients $b_k^{(i,j)}$
$(0\leq k\leq m)$ (cf.~\eqref{E:BSplineABB}) of all functions $N_{mi}$ over all
non-trivial knot spans $[t_j, t_{j+1}) \subset [t_0, t_n]$, i.e., for
$j=0,1,\ldots,n-1$ and $i=j-m,j-m+1,\ldots,j$.
\end{problem}

Notice that if the coefficents $b_k^{(i,j)}$ are already known, it is possible
to use the geometric algorithm proposed in~\cite{WCh2020}
to compute any B-spline function in linear time with respect to its degree.
By using the computed values of B-spline functions, one can efficiently
evaluate a point on a B-spline curve.
Additionally, the knowledge of the coefficients of a B-spline function
can allow to perform more operations, such as differentiation or integration,
analytically, e.g.,
when solving differential or integral equations
using numerical methods.

The adjusted Bernstein-B\'{e}zier coefficients of B-spline functions
satisfy some easy to prove properties.
\begin{theorem}\label{T:BB-NonNeg}
For $u \in [t_j, t_{j+1})$ $(j=0,1,\ldots,n-1)$, the coefficients
$b_{k,m}^{(i,j)}$ ($k=0,1,\ldots,m$) of the adjusted Bernstein-B\'{e}zier
representation of the B-spline function $N_{mi}$ (cf.~Eq.~\eqref{E:BSplineABB})
are non-negative.
\end{theorem}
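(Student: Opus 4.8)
The plan is to argue by induction on the degree $m$, using the de Boor--Mansfield--Cox recurrence~\eqref{E:BSplineRec} as the engine and the fact that multiplying a polynomial with non-negative Bernstein coefficients by a degree-one factor that itself has non-negative Bernstein coefficients again yields non-negative coefficients. It is worth flagging at the outset \emph{why} something must be proved here: the mere non-negativity of $N_{mi}$ (Theorem~\ref{T:BSplineNonNegSum1}) does not suffice, since a polynomial that is non-negative on $[0,1]$ may have negative Bernstein coefficients (for instance $(2t-1)^2 = B^2_0(t) - B^2_1(t) + B^2_2(t)$). Thus the proof must exploit the actual recursive construction of the B-spline functions, not positivity alone.

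For the base case $m=0$, Eq.~\eqref{E:BSplineN0iExplicit} shows that on a span $[t_j,t_{j+1})$ the function $N_{0i}$ is either the constant $1=B^0_0$ or $0$, so $b^{(i,j)}_0\in\{0,1\}$. For the inductive step I would fix a non-trivial span $[t_j,t_{j+1})$ and put $s=(u-t_j)/(t_{j+1}-t_j)\in[0,1)$, then rewrite the two affine multipliers of~\eqref{E:BSplineRec} in the degree-one Bernstein basis over the span:
\begin{align*}
u-t_i &= (t_j-t_i)(1-s)+(t_{j+1}-t_i)\,s,\\
t_{m+i+1}-u &= (t_{m+i+1}-t_j)(1-s)+(t_{m+i+1}-t_{j+1})\,s.
\end{align*}
Because the span lies in the admissible range $i\le j\le i+m$ (cf.~\eqref{E:BSplineABB} and Problem~\ref{P:BSpline1}), all four coefficients $t_j-t_i,\ t_{j+1}-t_i,\ t_{m+i+1}-t_j,\ t_{m+i+1}-t_{j+1}$ are non-negative; this is precisely where the restriction on $j$ is used.

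Next I would invoke the induction hypothesis, which furnishes non-negative coefficients in $N_{m-1,i}=\sum_k b^{(i,j)}_{k,m-1}B^{m-1}_k(s)$ and in the analogous expansion of $N_{m-1,i+1}$ (both of which vanish identically, hence have zero coefficients, on spans outside their respective supports). Combining this with the multiplication rules $(1-s)B^{m-1}_k(s)=\tfrac{m-k}{m}B^m_k(s)$ and $sB^{m-1}_k(s)=\tfrac{k+1}{m}B^m_{k+1}(s)$, which follow at once from~\eqref{E:Def_BernPoly}, each product $(u-t_i)N_{m-1,i}(u)$ and $(t_{m+i+1}-u)N_{m-1,i+1}(u)$ expands in the degree-$m$ Bernstein basis with non-negative coefficients. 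Dividing by the positive quantities $t_{m+i}-t_i$ and $t_{m+i+1}-t_{i+1}$ (or treating the term as $0$ when a denominator degenerates, by Remark~\ref{R:BSplineConvention}) preserves non-negativity, and summing the two contributions as in~\eqref{E:BSplineRec} gives $b^{(i,j)}_{k,m}\ge 0$.

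I expect the only delicate part to be the bookkeeping at the boundary of the support and in the coincident-knot cases: one must verify that whenever $t_{m+i}-t_i$ or $t_{m+i+1}-t_{i+1}$ vanishes the convention of Remark~\ref{R:BSplineConvention} makes the whole term disappear rather than introduce a spurious sign, and that on the extreme spans $j=i$ and $j=i+m$ the neighbouring lower-degree function is genuinely zero there. These checks are routine once the signs of the affine factors above are in hand, so the conceptual crux is simply the observation that the admissible range $i\le j\le i+m$ forces the degree-one multipliers to have non-negative Bernstein coefficients.
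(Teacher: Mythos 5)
Your proof is correct and complete. A point of comparison worth making explicit: the paper states Theorem~\ref{T:BB-NonNeg} without giving an argument (it is introduced as one of the ``easy to prove properties''), so your induction on the degree via the de Boor--Mansfield--Cox recurrence~\eqref{E:BSplineRec} is in effect supplying the proof the paper omits, and it is the natural one. All the details check out: the degree-one Bernstein expansions of the affine multipliers are exact, the four sign conditions $t_j-t_i\ge 0$, $t_{j+1}-t_i\ge 0$, $t_{m+i+1}-t_j\ge 0$, $t_{m+i+1}-t_{j+1}\ge 0$ do follow from the admissible range $i\le j\le i+m$ (outside it both sides are identically zero on the span), the product rules $(1-s)B^{m-1}_k(s)=\frac{m-k}{m}B^m_k(s)$ and $sB^{m-1}_k(s)=\frac{k+1}{m}B^m_{k+1}(s)$ are verified directly from~\eqref{E:Def_BernPoly}, and the degenerate-denominator cases are handled exactly as Remark~\ref{R:BSplineConvention} prescribes, with the corresponding lower-degree factor vanishing on the span so no spurious term arises. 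Your preliminary remark that non-negativity of $N_{mi}$ alone is insufficient --- illustrated by $(2t-1)^2=B^2_0(t)-B^2_1(t)+B^2_2(t)$, whose middle coefficient is negative --- is apt and worth keeping, and it has a counterpart inside the paper itself: non-negativity of the $b_k^{(i,j)}$ is \emph{not} manifest from the paper's own computational scheme either, since the recurrence~\eqref{E:BSplineStage3} used in Stage~2 contains the weight $t_{j+1}-t_{m+i+2}$, which is negative for the relevant indices. So your argument, besides being valid, establishes a fact that neither positivity of the functions nor the paper's recurrences make obvious.
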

%\begin{comment}
%\begin{proof}
%If $j < i$ or $j > i + m$, then all coefficients of $N_{mi}(u)$ are zero and
%thus are non-negative. The case $i \leq j \leq i + m$ can be proved using
%induction over $m$.
%
%Base case ($m=0$): the non-negativity of the coefficients follows directly from
%Eq.~\eqref{E:BSplineN0iExplicit}.
%
%Induction step ($m-1 \rightarrow m$): let us assume that the theorem is true for
%all B-spline functions of degree $m-1$. Let us then take a look
%at~\eqref{E:BSplineCoeffLemma}:
%\begin{multline*}
%b_{k,m}^{(i,j)} =
%   \dfrac{k}{m} \Big(
%     \dfrac{t_{j+1} - t_{i}}{t_{m+i} - t_{i}} b_{k-1,m-1}^{(i,j)}
%     + \dfrac{t_{m+i+1} - t_{j+1}}{t_{m+i+1} - t_{i+1}} b_{k-1,m-1}^{(i+1,j)}
%                                                                        \Big)\\
%   + \dfrac{m-k}{m} \Big(
%     \dfrac{t_{j} - t_{i}}{t_{m+i} - t_{i}} b_{k,m-1}^{(i,j)}
%     + \dfrac{t_{m+i+1} - t_j}{t_{m+i+1} - t_{i+1}} b_{k,m-1}^{(i+1,j)} \Big).
%\end{multline*}
%It is clear that the fractions
%$$
% \dfrac{t_{j+1} - t_{i}}{t_{m+i} - t_{i}},
% \dfrac{t_{m+i+1} - t_{j+1}}{t_{m+i+1} - t_{i+1}},
% \dfrac{t_{j} - t_{i}}{t_{m+i} - t_{i}},
% \dfrac{t_{m+i+1} - t_{j}}{t_{m+i+1} - t_{i+1}},
% \dfrac{m-k}{m}
%$$
%are non-negative, as $t_k \leq t_{\ell}$ if $k < \ell$. The remaining arithmetic
%operations are additions and multiplications of non-negative elements, which
%clearly gives a non-negative result.
%\end{proof}
%\end{comment}

\begin{theorem}\label{T:BB-Partition1}
For $u \in [t_j, t_{j+1})$ $(j=0,1,\ldots,n-1)$, the following relation holds:
$$
\sum_{i=j-m}^j b_{k,m}^{(i,j)} = 1 \qquad (k=0,1,\ldots,m),
$$
where $b_{k,m}^{(i,j)}$ are the adjusted Bernstein-B\'{e}zier coefficients of
$N_{mi}$ (cf.~\eqref{E:BSplineABB}).
\end{theorem}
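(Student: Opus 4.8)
The plan is to prove Theorem~\ref{T:BB-Partition1} by combining the partition-of-unity property of the B-spline functions (Theorem~\ref{T:BSplineNonNegSum1}) with the uniqueness of the adjusted Bernstein-B\'{e}zier representation on a single knot span. Fix a knot span $[t_j,t_{j+1})$ with $t_j<t_{j+1}$. On this span, only the functions $N_{mi}$ with $i=j-m,j-m+1,\ldots,j$ are (possibly) non-zero, since the support of $N_{mi}$ is $[t_i,t_{m+i+1}]$; this is exactly the index range appearing in the sum. By Theorem~\ref{T:BSplineNonNegSum1}, $\sum_{i=j-m}^{j}N_{mi}(u)=1$ for all $u\in[t_j,t_{j+1})$.

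First I would write each $N_{mi}$ in its adjusted Bernstein-B\'{e}zier form~\eqref{E:BSplineABB} over this span and sum over $i$:
\begin{equation*}
\sum_{i=j-m}^{j}N_{mi}(u)
  =\sum_{i=j-m}^{j}\sum_{k=0}^{m}b_{k,m}^{(i,j)}
     B^m_k\!\Big(\tfrac{u-t_j}{t_{j+1}-t_j}\Big)
  =\sum_{k=0}^{m}\Big(\sum_{i=j-m}^{j}b_{k,m}^{(i,j)}\Big)
     B^m_k\!\Big(\tfrac{u-t_j}{t_{j+1}-t_j}\Big).
\end{equation*}
Since this equals $1$ for every $u\in[t_j,t_{j+1})$, and since the Bernstein polynomials themselves give the partition of unity, $\sum_{k=0}^{m}B^m_k(s)\equiv1$, I would rewrite the constant $1$ on the right-hand side as $\sum_{k=0}^{m}B^m_k\big(\tfrac{u-t_j}{t_{j+1}-t_j}\big)$. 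This yields the identity
\begin{equation*}
\sum_{k=0}^{m}\Big(\sum_{i=j-m}^{j}b_{k,m}^{(i,j)}-1\Big)
  B^m_k\!\Big(\tfrac{u-t_j}{t_{j+1}-t_j}\Big)=0
\end{equation*}
for all $u\in[t_j,t_{j+1})$.

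The concluding step is to invoke the linear independence of the Bernstein basis $\{B^m_0,B^m_1,\ldots,B^m_m\}$. As $u$ ranges over the non-empty interval $[t_j,t_{j+1})$, the argument $s=(u-t_j)/(t_{j+1}-t_j)$ ranges over a non-trivial subinterval of $[0,1)$; since a polynomial identity that holds on an interval holds identically, the left-hand side is the zero polynomial, and linear independence forces each coefficient to vanish, giving $\sum_{i=j-m}^{j}b_{k,m}^{(i,j)}=1$ for every $k=0,1,\ldots,m$, as claimed.

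The argument is essentially routine once the setup is in place, so there is no serious obstacle; the only point requiring mild care is the bookkeeping on the index range. One should check that the support property genuinely kills every $N_{mi}$ with $i<j-m$ or $i>j$ on the span $[t_j,t_{j+1})$, so that the B-spline partition of unity from Theorem~\ref{T:BSplineNonNegSum1} matches the finite sum $\sum_{i=j-m}^{j}$ appearing in the theorem statement, and that the convention $N_{mi}\equiv0$ outside the admissible index range (cf.~Remark~\ref{R:BSplineConvention}) is consistent with dropping those terms.
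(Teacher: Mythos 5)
Your proof is correct: combining the partition-of-unity property of Theorem~\ref{T:BSplineNonNegSum1} with the support property of $N_{mi}$ and the linear independence of the Bernstein basis $\{B^m_0,\ldots,B^m_m\}$ is exactly the intended argument --- the paper states this result without proof, calling it ``easy to prove,'' and your write-up is the standard derivation it alludes to. The only cosmetic remark is that as $u$ ranges over the non-empty span $[t_j,t_{j+1})$ the parameter $s=(u-t_j)/(t_{j+1}-t_j)$ in fact covers all of $[0,1)$, which makes the vanishing-on-an-interval step even more immediate than your phrasing suggests.
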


A task similar to Problem~\ref{P:BSpline1} for the \textit{adjusted power basis form}
of the B-spline functions has been considered in~\cite{FChPhD}.

Let the adjusted power basis form of the B-spline function $N_{mi}$ over
a single non-empty knot span $[t_j, t_{j+1}) \subset [t_0, t_n]$
($j=i,i+1,\ldots,i+m$) be
\begin{equation}\label{E:BSplineAPB}
N_{mi}(u) = \sum_{k=0}^m a_k^{(i,j)} (u - t_j)^k\qquad (t_j \leq u < t_{j+1}),
\end{equation}
with $a_k^{(i,j)} \equiv a_{k,m}^{(i,j)}$.

Explicit expressions for the adjusted power basis coefficients of $N_{mi}$ have
been given in~\cite{LiuWang02}, and the result can be adapted for the adjusted
Bernstein-B\'{e}zier form. The serious drawback of this approach, however, is
high complexity, which greatly limits the use of this result in computational
practice.

Another algorithm for finding the adjusted power basis coefficients of a spline
\begin{equation}\label{E:Spline}
s(t) := \sum_{i=-m}^{n-1} c_i N_{mi}(t).
\end{equation}
over a knot span $[t_j, t_{j+1})$ in $O(m^2)$ time
can be found in~\cite[\S1.3.2]{Dierckx93}.
By setting
$$ 
c_k := 
\left\{
\begin{array}{ll}
 1 & (k=i),\\
 0 & \mbox{otherwise}
\end{array}
\right.
$$
(see~\eqref{E:Spline}),
one can find the coefficients of
$N_{mi}$ over $[t_j, t_{j+1})$ in $O(m^2)$ time.
In total, to find the adjusted power basis coefficients
over $[t_j, t_{j+1})$ for all B-spline functions $N_{mi}$ such that 
$j-m \leq i \leq j$, one has to do $O(m^3)$ operations. Let us assume that there
are $n_e$ non-empty knot spans $[t_j, t_{j+1})$ such that $j=0,1,\ldots,n-1$. 
To find the coefficients of all B-spline functions over all non-empty knot spans
$[t_j, t_{j+1})$ for $j=0,1,\ldots,n-1$, one would need to perform $O(n_e m^3)$
operations.

With a similar approach, one can find the Bernstein-B\'{e}zier coefficients 
of $N_{mi}$ over the knot span $[t_j, t_{j+1})$. One can check that
\begin{equation*}
b_k^{(i,j)} = \dfrac{(m-k)!}{m!} N_{mi}^{(k)}(t_j)
                  - \sum_{\ell = 0}^{k-1} (-1)^{k-\ell} \binom{k}{\ell}
                                     b_{\ell}^{(i,j)} \qquad (k=0,1,\ldots,m)
\end{equation*}
(cf.~\cite[{Eq.~(5.25)}]{Farin2002} and \cite[{Theorem 4.1}]{LW2011}).
Just as in the case of the adjusted power basis, the Bernstein-B\'{e}zier coefficients
$b_k^{(i,j)}$ of $N_{mi}$ over $[t_j, t_{j+1})$ can be found in $O(m^2)$ time. 
In total, to find these coefficients for all B-spline functions over all
non-empty knot spans $[t_j, t_{j+1})$ for $j=0,1,\ldots,n-1$, it is required to
perform $O(n_e m^3)$ operations.

The approach given in \cite{Sablonniere78} and \cite{Boehm77} serves to convert 
a B-spline curve segment into a B\'{e}zier curve. It can be adapted to give an
algorithm with $O(m^3)$ complexity for finding the adjusted Bernstein-B\'{e}zier
coefficients $b_k^{(i,j)}$ of a single basis function $N_{mi}$. Doing so for 
each B-spline function in each non-empty knot span takes $O(n_e m^4)$ 
operations.

The Bernstein-B\'{e}zier coefficients in each knot span of a linear combination
of B-spline functions can also be found by using
the knot insertion method (see~\cite{Boehm80, CLR80}) in the following way.
After inserting the knots so that each has multiplicity $m+1$,
the basis functions become Bernstein polynomials and the coefficients
can be easily read. If all inner knots have multiplicity $1$,
the complexity of such procedure is $O(nm^2)$.

This does, however, return only the coefficients of one combination
of B-spline basis functions.
In order to find the coefficients of all B-spline basis functions
$$
N_{m, -m},N_{m, -m+1},\ldots, N_{m, n-1},
$$
one needs to do this procedure for each basis function
(i.e., with the coefficient $1$ for the selected basis function and $0$ otherwise).
This can be done in $O(nm^3)$ total time.

A generalization of B-splines which is currently gaining prominence are
multi-degree B-splines.
In~\cite{BC2021, BC2022, TS2020}, methods of finding
the coefficients of multi-degree B-splines are given,
which can be adapted for standard (uniform-degree) B-splines.
In particular, the method given in \cite{BC2022},
which is based on reverse knot insertion,
is shown to be numerically stable.

If there are sufficiently many computationally simple recurrence relations
for the Bernstein-B\'{e}zier coefficients of the B-spline functions
over multiple knot spans, one can instead use them to efficiently find each of
the coefficients. Over the course of this paper, they will be derived
from a new differential-recurrence relation for the B-spline functions.

\begin{remark}\label{R:BSplineAssumptions}
In the sequel, we assume that no inner knot $t_1, t_2, \ldots, t_{n-1}$ 
has multiplicity greater than $m$. This guarantees the B-spline functions'
continuity in $(t_0, t_n)$.
\end{remark}

The assumption regarding the multiplicity of the inner knots is very common
and intuitive, as it guarantees the continuity of a B-spline curve.
It was used, e.g., in~\cite{Dierckx93, LiuWang02} and \cite[{\S 3}]{Wozny2014}. 

Let us suppose that there are $n_e$ non-empty knot spans $[t_j, t_{j+1})$ such
that $0\leq j\leq n-1$. One of the main goals of the paper is to give a recursive
way of computing all $O(n_e m^2)$ coefficients $b_k^{(i,j)}$
(cf.~\eqref{E:BSplineABB}) of the
B-spline functions in $O(n_e m^2)$ time, assuming that all boundary knots are
coincident. 

The possible applications of this result can be as follows. Once the 
adjusted Bernstein-B\'{e}zier coefficients $b_k^{(i,j)}$ are known, each point 
on a B-spline curve \p{S},
\begin{equation}\label{E:BSplineCurve}
\p{S}(u):=\sum_{i=-m}^{n-1}N_{mi}(u)\p{W}_i
\qquad(t_0\leq u\leq t_n;\ \p{W}_i\in{\mathbb E}^d),	
\end{equation}
can be computed in $O(m^2 + md)$ time using the geometric algorithm proposed
recently by the authors in~\cite{WCh2020}. If there are $N$ such points on $M$
curves (each with the same knots) --- a situation closely related to rendering
a tensor product B-spline surface --- the total complexity is
$O(n_e m^2 + Nm^2 + MNmd)$, compared to $O(MNm^2d)$ when using the de Boor-Cox
algorithm. Performed experiments confirm that the new method is faster than
the de~Boor-Cox algorithm even for low $M \approx 2, 3$. Using a similar 
approach, one can also compute the value of any $N_{mi}$ in $O(m)$ time.

% The paper is organized as follows. In Section~\ref{S:BSplineDiffRec} we prove
% the new differential-recurrence relation between the B-spline functions of the
% same degree. It will be the foundation for new recurrence relations which
% can be used to formulate an algorithm which computes the coefficients
% $a_k^{(i,j)}$ or $b_k^{(i,j)}$ of B-spline functions over each knot span.
% In Section~\ref{S:BSplineBezier}, the algorithm for finding the coefficients
% $b_k^{(i,j)}$ in the adjusted Bernstein-B\'{e}zier form if $t_{-m} = t_0$,
% $t_n = t_{n+m}$ and all inner knots $t_1, t_2, \ldots, t_{n-1}$ have multiplicity
% $1$ is given. The computational complexity of the method is $O(nm^2)$. This means
% that, asymptotically, the algorithm is optimal. Section~\ref{S:BSplineFastComp}
% expands upon using the new algorithm to compute multiple points on multiple
% B-spline curves. The assumptions about knot multiplicity which were made in
% Section~\ref{S:BSplineBezier} are then relaxed in Section~\ref{S:BSplineGen} to
% cover all cases (cf.~Remark~\ref{R:BSplineAssumptions}).
% In Section~\ref{S:BSplinePower}, the results given in
% Section~\ref{S:BSplineBezier} are adapted to the adjusted power basis.
Additionally, if the Bernstein-B\'{e}zier coefficients of the B-spline basis functions are known,
one can use them to convert a B-spline curve over one non-empty knot span
to a B\'{e}zier curve:
$$
\p{S}(u) = \sum_{k=0}^m \Big( \sum_{i=j-m}^{j}  b^{(i,j)}_k \p{W}_i \Big)
                         B^m_k\Big (\dfrac{u - t_j}{t_{j+1} - t_j} \Big) \qquad (t_j \leq u < t_{j+1}).
$$

%%%%%%%%%%%%%%%%%%%%%%%%%%%%%%%%%%%%%%%%%%%%%%%%%%%%%%%%%%%%%%%%%%%%%%%%%%%%%%
%%%%%%%%%%%%%%%%%%%%%%%%%%%%%%%%%%%%%%%%%%%%%%%%%%%%%%%%%%%%%%%%%%%%%%%%%%%%%%
\section{New differential-recurrence relation for B-spline functions}
                                                      \label{S:BSplineDiffRec}
%%%%%%%%%%%%%%%%%%%%%%%%%%%%%%%%%%%%%%%%%%%%%%%%%%%%%%%%%%%%%%%%%%%%%%%%%%%%%%
%%%%%%%%%%%%%%%%%%%%%%%%%%%%%%%%%%%%%%%%%%%%%%%%%%%%%%%%%%%%%%%%%%%%%%%%%%%%%%

Using the recurrence relation~\eqref{E:BSplineRec} which connects B-spline
functions of consecutive degrees, one can find a recurrence relation which
is satisfied by their coefficients in the chosen basis. %:
% \begin{multline*}%\label{E:BSplineCoeffLemma}
% b_{k,m}^{(i,j)} =
%    \dfrac{k}{m} \Big(
%      \dfrac{t_{j+1} - t_{i}}{t_{m+i} - t_{i}} b_{k-1,m-1}^{(i,j)}
%      + \dfrac{t_{m+i+1} - t_{j+1}}{t_{m+i+1} - t_{i+1}} b_{k-1,m-1}^{(i+1,j)}
%    \Big)\\
%    + \dfrac{m-k}{m} \Big(
%      \dfrac{t_{j} - t_{i}}{t_{m+i} - t_{i}} b_{k,m-1}^{(i,j)}
%      + \dfrac{t_{m+i+1} - t_j}{t_{m+i+1} - t_{i+1}} b_{k,m-1}^{(i+1,j)}
%    \Big),
% \end{multline*}
% where $k=0,1,\ldots,m$, and $b_{-1,m-1}^{(i,j)}=b_{-1,m-1}^{(i+1,j)}=
% b_{m,m-1}^{(i,j)}=b_{m,m-1}^{(i+1,j)}:=0$ (cf.~\eqref{E:BSplineABB}).
It is, however, not optimal as the recurrence scheme
is analogous to the one used in the de Boor-Cox algorithm.

A new differential-recurrence relation for the
B-spline functions of the same degree $m$ will be derived. We show that by using 
this result, it is possible to find all the Bernstein-B\'{e}zier coefficients
faster (see Section~\ref{S:BSplineBezier}).

% OSTATNIO

Using equations~\eqref{E:BSplineRec} and~\eqref{E:BSplineDeriv}, one can derive
new differential-recurrence relations for the B-spline functions of the same
degree. For example, this result can be used to efficiently compute the
coefficients of the $N_{mi}$ functions (which are polynomial in each of the
\textit{knot spans}) in an adjusted Bernstein-B\'{e}zier or power basis.
% The precise nature of the computations depends, certainly, on the basis.

\begin{theorem}\label{T:BSplineDiffRec}
Let
$t_{-m} = t_{-m+1} = \ldots = t_0 < t_1 < \ldots < t_{n-1} < t_n 
                                                  = t_{n+1} = \ldots = t_{n+m}$
(cf.~\eqref{E:KnotClones}). The following relations hold:
\begin{align}
  &m N_{m, -m}(u) + (t_1- u) N_{m,-m}'(u) = 0,
    \label{E:BSplineSimple1}&\\
  &N_{mi}(u) + \dfrac{t_i - u}{m} N_{mi}'(u) =
      \dfrac{t_{m+i+1} - t_i}{t_{m+i+2} - t_{i+1}}
      \left(N_{m,i+1}(u) + \dfrac{t_{m+i+2} - u}{m} N_{m,i+1}'(u)\right)
  \label{E:BSplineDiffRecBase}&\\
  \nonumber & \qquad \qquad \qquad \qquad \qquad \qquad
              \qquad \qquad \qquad \qquad \qquad(i=-m,-m+1,\ldots,n-2),&\\
  &m N_{m,n-1}(u) + (t_{n-1} - u) N_{m,n-1}'(u) = 0.&
  \label{E:BSplineSimple2}
\end{align}
\end{theorem}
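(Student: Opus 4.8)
The plan is to reduce all three identities to a single pair of algebraic identities relating $N_{mi}$ and its derivative to the degree-$(m-1)$ functions, obtained by combining the recursion~\eqref{E:BSplineRec} with the derivative formula~\eqref{E:BSplineDeriv}. First I would establish, for every admissible index $i$ and without any assumption on the knots, the two one-sided identities
\begin{align*}
  N_{mi}(u) + \frac{t_i - u}{m}\,N_{mi}'(u)
    &= (t_{m+i+1} - t_i)\,\frac{N_{m-1,i+1}(u)}{t_{m+i+1} - t_{i+1}},\\
  N_{mi}(u) + \frac{t_{m+i+1} - u}{m}\,N_{mi}'(u)
    &= (t_{m+i+1} - t_i)\,\frac{N_{m-1,i}(u)}{t_{m+i} - t_i}.
\end{align*}
Each follows by substituting~\eqref{E:BSplineRec} and~\eqref{E:BSplineDeriv} into the left-hand side: in the first identity the $N_{m-1,i}$-terms carry the factor $(u-t_i)+(t_i-u)=0$ and cancel, leaving the coefficient $(t_{m+i+1}-u)-(t_i-u)=t_{m+i+1}-t_i$ on $N_{m-1,i+1}$; the second is the mirror computation, in which instead the $N_{m-1,i+1}$-terms cancel. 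This is the only genuine calculation in the proof, and it is short.

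With these in hand, the main relation~\eqref{E:BSplineDiffRecBase} is immediate. Applying the first identity at index $i$ and the second at index $i+1$ shows that both sides of~\eqref{E:BSplineDiffRecBase} equal $(t_{m+i+1}-t_i)\,N_{m-1,i+1}(u)/(t_{m+i+1}-t_{i+1})$: the left-hand side directly, and the right-hand side once the factor $t_{m+i+2}-t_{i+1}$ produced by the second identity at index $i+1$ cancels against the denominator of the prefactor $(t_{m+i+1}-t_i)/(t_{m+i+2}-t_{i+1})$. No knot-coincidence hypothesis is needed here; the relation holds for each $i=-m,-m+1,\ldots,n-2$.

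Finally, the two boundary relations~\eqref{E:BSplineSimple1} and~\eqref{E:BSplineSimple2} fall out as degenerate cases of the same pair. Relation~\eqref{E:BSplineSimple1} is exactly the second identity evaluated at $i=-m$, whose right-hand side is $(t_1-t_{-m})\,N_{m-1,-m}(u)/(t_0-t_{-m})$; since the coincident left boundary gives $t_{-m}=t_0$, this vanishes by the convention in Remark~\ref{R:BSplineConvention}. Symmetrically, relation~\eqref{E:BSplineSimple2} is the first identity at $i=n-1$, whose right-hand side is $(t_{m+n}-t_{n-1})\,N_{m-1,n}(u)/(t_{m+n}-t_n)$ and vanishes because $t_n=t_{m+n}$ at the coincident right boundary (and because $N_{m-1,n}\equiv0$). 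The main thing to watch is therefore not difficulty but bookkeeping: I must apply the conventions of Remark~\ref{R:BSplineConvention} (a quotient with a vanishing knot-difference in its denominator is $0$, and $N_{mi}\equiv0$ outside the valid index range) consistently, so that the one-sided identities remain valid at the extreme indices $i=-m$ and $i=n-1$, where one of the degree-$(m-1)$ terms formally involves a degenerate support.
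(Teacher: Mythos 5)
Your proof is correct, and it takes a genuinely different route from the paper's one-line argument. The paper proves \eqref{E:BSplineDiffRecBase} by going one degree \emph{up}: it writes the recursion \eqref{E:BSplineRec} for $N_{m+1,i}$, differentiates it, and equates the result with the expression for $N_{m+1,i}'$ from \eqref{E:BSplineDeriv}; rearranging gives \eqref{E:BSplineDiffRecBase} directly, while \eqref{E:BSplineSimple1} and \eqref{E:BSplineSimple2} are dispatched separately as easy consequences of \eqref{E:BSplineRec} and \eqref{E:BSplineDeriv}. You go one degree \emph{down}: substituting \eqref{E:BSplineRec} and \eqref{E:BSplineDeriv} at degree $m$ into the combinations $N_{mi}+\frac{t_i-u}{m}N_{mi}'$ and $N_{mi}+\frac{t_{m+i+1}-u}{m}N_{mi}'$ yields your two one-sided identities (the stated cancellations check out), and \eqref{E:BSplineDiffRecBase} follows because both of its sides equal the common value $(t_{m+i+1}-t_i)\,N_{m-1,i+1}(u)/(t_{m+i+1}-t_{i+1})$. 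Your route buys two things the paper's does not: it is slightly stronger, since it exhibits what both sides of \eqref{E:BSplineDiffRecBase} actually equal --- an explicit multiple of a single degree-$(m-1)$ B-spline, which the paper's derivation never identifies --- and it treats all three relations uniformly, with \eqref{E:BSplineSimple1} and \eqref{E:BSplineSimple2} falling out as the degenerate instances $i=-m$ and $i=n-1$, annihilated by the convention of Remark~\ref{R:BSplineConvention} because $t_{-m}=t_0$ and $t_n=t_{n+m}$. The paper's route buys brevity and avoids degree-$(m-1)$ functions altogether, hence needs no bookkeeping with degenerate supports at the extreme indices. One small caution on your parenthetical claim that \eqref{E:BSplineDiffRecBase} requires no knot hypothesis: for arbitrary knots the prefactor $(t_{m+i+1}-t_i)/(t_{m+i+2}-t_{i+1})$ may have a vanishing denominator, and the identity then survives only via the convention of Remark~\ref{R:BSplineConvention} (if $t_{i+1}=t_{m+i+2}$ then also $t_{i+1}=t_{m+i+1}$ by monotonicity, so both sides reduce to the conventional $0$); under the theorem's hypothesis the range $-m\le i\le n-2$ precludes this degeneracy, so your argument is complete as written.
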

\begin{proof}
Equations~\eqref{E:BSplineSimple1} and~\eqref{E:BSplineSimple2} follow easily
from equations~\eqref{E:BSplineRec} and~\eqref{E:BSplineDeriv}, respectively. 
The relation~\eqref{E:BSplineDiffRecBase} follows directly from taking the
expression for $N_{m+1,i}$ from Eq.~\eqref{E:BSplineRec} and differentiating
it, then equating it with the expression for $N_{m+1,i}'$ given in
Eq.~\eqref{E:BSplineDeriv}.
\end{proof}

Theorem~\ref{T:BSplineDiffRec} can be used to find a recurrence relation
satisfied by the adjusted Bernstein-B\'{e}zier coefficients of B-spline 
functions of the same degree, as will be shown in the next section.

%%%%%%%%%%%%%%%%%%%%%%%%%%%%%%%%%%%%%%%%%%%%%%%%%%%%%%%%%%%%%%%%%%%%%%%%%%%%%%
%%%%%%%%%%%%%%%%%%%%%%%%%%%%%%%%%%%%%%%%%%%%%%%%%%%%%%%%%%%%%%%%%%%%%%%%%%%%%%
\section{Recurrence relations for B-spline functions' coefficients in
                                          adjusted Bernstein-B\'{e}zier basis}
                                                       \label{S:BSplineBezier}
%%%%%%%%%%%%%%%%%%%%%%%%%%%%%%%%%%%%%%%%%%%%%%%%%%%%%%%%%%%%%%%%%%%%%%%%%%%%%%
%%%%%%%%%%%%%%%%%%%%%%%%%%%%%%%%%%%%%%%%%%%%%%%%%%%%%%%%%%%%%%%%%%%%%%%%%%%%%%

Assume that that the knot sequence is clamped, i.e.,
$$
t_{-m} = t_{-m+1} = \ldots = t_0 < t_1 < \ldots < t_n =
                                                    t_{n+1} = \ldots = t_{n+m}
$$
(cf.~\eqref{E:KnotClones}). 

For each knot span $[t_j, t_{j+1})$ ($j=0,1,\ldots,n-1$), one needs to find the
coefficients of $N_{mi}$ ($i=j-m,j-m+1,\ldots,j$) in the following adjusted
Bernstein-B\'{e}zier basis form:
$$
N_{mi}(u) = \sum_{k=0}^m b^{(i,j)}_k B^m_k(t) \qquad (t_j \leq u < t_{j+1}),
$$
where $b_k^{(i,j)} \equiv b_{k,m}^{(i,j)}$ and
\begin{equation}\label{E:TofU}
t \equiv t^{(j)}(u) := \dfrac{u - t_j}{t_{j+1} - t_j}
\end{equation}
(cf.~Eq.~\eqref{E:BSplineABB} and Problem~\ref{P:BSpline1}).
Additionally, then,
$u = (t_{j+1} - t_j)t + t_j$.

Certainly, $N_{mi}(u) \equiv 0$ if $u < t_i$ or $u > t_{m+i+1}$, which means 
that for a given knot span $[t_j, t_{j+1})$, one only needs to find the
coefficients of $N_{m,j-m}, N_{m,j-m+1},\ldots,N_{mj}$, as all coefficients of
other B-spline functions over this knot span are identical to zero. Thus, in 
each of $n$ knot spans, there are $m+1$ non-zero B-spline functions, each with
$m+1$ coefficients.

Solving Problem~\ref{P:BSpline1} requires computing $n (m+1)^2$ coefficients
$b_k^{(i,j)}$. In this section, it will be shown how to do it in $O(nm^2)$
time --- proportionally to the number of coefficients.
Theorem~\ref{T:BSplineDiffRec} serves as a foundation of the presented approach.
More precisely, the theorem will be used to construct recurrence relations
for the coefficients $b_k^{(i,j)}$ which allow solving Problem~\ref{P:BSpline1}
efficiently.

The results for particular cases will be presented in stages. 
In~\S\ref{SS:BSplineStage1}, explicit expressions for the coefficients of
$N_{mj}$ and $N_{m,j-m}$ over $[t_j, t_{j+1})$ ($j=0,1,\ldots,n-1$) will be
found. This will, in particular, cover the only non-trivial knot span for
$N_{m,n-1}$. In \S\ref{SS:BSplineStage2}, Eq.~\eqref{E:BSplineDiffRecBase} will
be applied to find the coefficients of $N_{mi}$ for $j=n-1,n-2,\ldots,0$ and
$i=j-1,j-2,\ldots,j-m+1$.

\begin{remark}
  In the sequel, it will be assumed that $u \in [t_j, t_{j+1})$.
  Thus, $t \in [0, 1)$ (cf.~\eqref{E:TofU}).
\end{remark}

%%%%%%%%%%%%%%%%%%%%%%%%%%%%%%%%%%%%%%%%%%%%%%%%%%%%%%%%%%%%%%%%%%%%%%%%%%%%%%
\subsection{Stage 1}                                  \label{SS:BSplineStage1}
%%%%%%%%%%%%%%%%%%%%%%%%%%%%%%%%%%%%%%%%%%%%%%%%%%%%%%%%%%%%%%%%%%%%%%%%%%%%%%

For $j=0,1,\ldots,n-1$, one can use Eq.~\eqref{E:BSplineRec} for $i=j$, along
with the fact that $N_{\ell,j+1} \equiv 0$ over $[t_j, t_{j+1})$
($\ell=m-1,m-2,\ldots,0$), to find that
\begin{equation*}% \label{E:BSplineBezFirst}
N_{mj}(u) = \dfrac{(u-t_j)^m}{\prod_{k=1}^m (t_{j+k} - t_j)} N_{0j}(u)
= \dfrac{(t_{j+1}-t_j)^{m-1}}{\prod_{k=2}^m (t_{j+k} - t_j)} B^m_m(t)
\qquad (u \in [t_j, t_{j+1})).
\end{equation*}
It means that
\begin{equation}\label{E:BSplineStage1Exp}
\left\{\begin{array}{ll}
  b_k^{(j,j)} = 0 & (k = 0,1,\ldots,m-1),\\
  b_m^{(j,j)} = \dfrac{(t_{j+1}-t_j)^{m-1}}{\prod_{k=2}^m (t_{j+k} - t_j)},&
\end{array}\right.
\end{equation}
where $0 \leq j \leq n-1$.

Using the same approach for $N_{m,j-m}$ over $[t_j, t_{j+1})$  gives
\begin{equation*}% \label{E:BSplineBezLast}
N_{m,j-m}(u) = \dfrac{(t_{j+1} - t_j)^{m-1}}{\prod_{k=2}^{m}(t_{j+1} -
                                                          t_{j+1-k})} B^m_0(t)
                                                          \qquad (u \in [t_j, t_{j+1})).
\end{equation*}
The coefficients $b_k^{(j-m,j)}$ ($k=0,1,\ldots,m$) are thus given by the
following formula:
\begin{equation}\label{E:BSplineStage1LastExp}
\left\{\begin{array}{ll}
  b_0^{(j-m,j)} = \dfrac{(t_{j+1} - t_j)^{m-1}}
                        {\prod_{k=2}^{m}(t_{j+1} - t_{j+1-k})},&\\
  b_k^{(j-m,j)} = 0 & (k = 1,2,\ldots,m),
\end{array}\right.
\end{equation}
where $0 \leq j \leq n-1$. The adjusted Bernstein-B\'{e}zier coefficients of
$N_{mj}$ and $N_{m,j-m}$ over the knot span $[t_j, t_{j+1})$
(cf.~Eq.~\eqref{E:BSplineABB}) have been found for $j=0,1,\ldots,n-1$.

In the sequel, the following observation will be of use.
\begin{remark}\label{R:LastColZero}
Note that
$$
N_{m,n-1}(t_n) = \dfrac{(t_n-t_{n-1})^{m-1}}
                       {\prod_{k=1}^{m-1} (t_{n+k} - t_{n-1})} B^m_m(1) = 1,
$$
since $t_n = t_{n+1} = \ldots = t_{n+m}$. The B-spline functions have the
partition of unity property and are non-negative
(cf.~Theorem~\ref{T:BSplineNonNegSum1}), it is thus clear that
$$
N_{mi}(t_n) = 0 \qquad (i=-m,-m+1,\ldots,n-2).
$$
Similarly,
$$
N_{m,-m}(t_0)=\dfrac{(t_1 - t_0)^m}{\prod_{k=1}^{m}(t_1-t_{1-k})}B^m_0(0)=1,
$$
since $t_{-m} = t_{-m+1} = \ldots = t_0$. It follows that $N_{mi}(t_0) = 0$ for
$i=-m+1,-m+2,\ldots,0$.
\end{remark}

%%%%%%%%%%%%%%%%%%%%%%%%%%%%%%%%%%%%%%%%%%%%%%%%%%%%%%%%%%%%%%%%%%%%%%%%%%%%%%
\subsection{Stage 2}                                  \label{SS:BSplineStage2}
%%%%%%%%%%%%%%%%%%%%%%%%%%%%%%%%%%%%%%%%%%%%%%%%%%%%%%%%%%%%%%%%%%%%%%%%%%%%%%

To compute the coefficients of all functions $N_{mi}$ over knot spans
$[t_j, t_{j+1})$ such that $j=n-1,n-2,\ldots,0$ and $i=j-1,j-2,\ldots,j-m+1$,
Eq.~\eqref{E:BSplineDiffRecBase} will be used. The following identity will be
useful when operating on Eq.~\eqref{E:BSplineDiffRecBase}:
\begin{equation}\label{E:BSplineDerivVar}
\Big(N_{mi}(u) \Big)' = \dfrac{d N_{mi}(u)}{du}
= \sum_{k=0}^m b^{(i,j)}_k \dfrac{d B^m_k(t)}{dt} \cdot \dfrac{d t}{du}
= (t_{j+1} - t_j)^{-1} \sum_{k=0}^m b^{(i,j)}_k \Big( B^m_k(t) \Big)',
\end{equation}
with $u \in [t_j, t_{j+1})$ (cf.~\eqref{E:TofU}).

Let
\begin{equation}\label{E:BSplineV}
v_i \equiv v_{mi} := \dfrac{t_{m+i+1} - t_i}{t_{m+i+2} - t_{i+1}}.
\end{equation}

Substituting the adjusted Bernstein-B\'{e}zier forms of $N_{mi}$ and $N_{m,i+1}$
in the knot span $[t_j, t_{j+1})$ and applying Eq.~\eqref{E:BSplineDerivVar} 
into Eq.~\eqref{E:BSplineDiffRecBase} gives
\begin{multline*}
  \sum_{k=0}^m b^{(i,j)}_k B^m_k(t) + \Big(\dfrac{t_i - t_j}{m (t_{j+1} - t_j)} 
  - \dfrac{t}{m}\Big) \sum_{k=0}^m b^{(i,j)}_k \left(B^m_k(t)\right)' = \\
  = v_i \left(\sum_{k=0}^m b^{(i+1,j)}_k B^m_k(t) + 
           \Big(\dfrac{t_{m+i+2} - t_j}{m (t_{j+1} - t_j)} - \dfrac{t}{m}\Big)
                      \sum_{k=0}^m b^{(i+1,j)}_k \left(B^m_k(t)\right)' \right).
\end{multline*}
After using identities~\eqref{E:BernsteinDeriv} and~\eqref{E:BernsteinuDeriv} 
and doing some algebra, one gets
\begin{multline*}
 \sum_{k=0}^{m} \left(l_{ki} b^{(i,j)}_{k-1} + d_{ki} b^{(i,j)}_k
                      + u_{ki} b^{(i,j)}_{k+1} \right) B^m_{k}(t) = \\ 
    = v_i \sum_{k=0}^{m} \Big(l_{k,m+i+2} b^{(i+1,j)}_{k-1} + d_{k,m+i+2}
                b^{(i+1,j)}_k + u_{k,m+i+2} b^{(i+1,j)}_{k+1} \Big) B^m_{k}(t),
\end{multline*}
where
$$
l_{kr} := k (t_{j+1} - t_r), \quad
d_{kr} := (m-k) (t_{j+1} - t_r) + k (t_r - t_j), \quad 
u_{kr} := (m-k) (t_r - t_j).
$$
Matching the coefficients of Bernstein polynomials on both sides gives a system
of $m+1$ equations of the form:
\begin{equation}\label{E:BSplineBezCase1}
\left\{
\begin{array}{l}
    (t_{j+1} - t_i) b^{(i,j)}_0 + (t_i - t_j) b^{(i,j)}_1 = v_i
        \Big((t_{j+1} - t_{m+i+2}) b^{(i+1,j)}_0 + (t_{m+i+2} - t_j)
                                                   b^{(i+1,j)}_1\Big),\\[3ex]
    l_{ki} b^{(i,j)}_{k-1} + d_{ki} b^{(i,j)}_k + u_{ki} b^{(i,j)}_{k+1}
    =v_i \Big(l_{k,m+i+2} b^{(i+1,j)}_{k-1} + d_{k,m+i+2} b^{(i+1,j)}_k
                                  + u_{k,m+i+2} b^{(i+1,j)}_{k+1}\Big)\\[2ex]
                                   \hspace*{\fill} (k=1,2,\ldots,m-1),\\[3ex]
   (t_{j+1} - t_i) b^{(i,j)}_{m-1} + (t_i - t_j) b^{(i,j)}_m
    = v_i \Big((t_{j+1} - t_{m+i+2}) b^{(i+1,j)}_{m-1} + (t_{m+i+2} - t_j)
                                                           b^{(i+1,j)}_m\Big).
\end{array}
\right.
\end{equation}

\begin{theorem}\label{T:BSplineBezCase1}
For $j=0,1,\ldots,n-1$ and $i=j-1,j-2,\ldots,j-m+1$, assuming that the
coefficients $b^{(i+1,j)}_k$ $(0\leq k\leq m$) are known, the values
$b^{(i,j)}_0, b^{(i,j)}_1, \ldots, b^{(i,j)}_m$ satisfy a first-order
non-homogeneous recurrence relation
\begin{equation}\label{E:BSplineBezCase1Th}
(t_{j+1} - t_i) b^{(i,j)}_k + (t_i - t_j) b^{(i,j)}_{k+1} = A(m,i,j,k)
                                                    \qquad (k=0,1,\ldots,m-1),
\end{equation}
where
$$
A(m,i,j,k) := v_i \Big((t_{j+1} - t_{m+i+2}) b^{(i+1,j)}_k +
                                    (t_{m+i+2} - t_j) b^{(i+1,j)}_{k+1} \Big)
$$
(cf.~\eqref{E:BSplineV}).
\end{theorem}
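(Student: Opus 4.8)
The plan is to show that the full tridiagonal system \eqref{E:BSplineBezCase1} collapses to the first-order recurrence \eqref{E:BSplineBezCase1Th}. To this end I would introduce the residual
$$
L_k := (t_{j+1} - t_i) b^{(i,j)}_k + (t_i - t_j) b^{(i,j)}_{k+1} - A(m,i,j,k) \qquad (k=0,1,\ldots,m-1),
$$
and prove that $L_k = 0$ for every $k$, which is precisely the assertion of the theorem. First I would dispatch the two boundary equations of \eqref{E:BSplineBezCase1}: matching the first and the last against the definition of $A(m,i,j,k)$ shows that they are literally the statements $L_0 = 0$ and $L_{m-1} = 0$, respectively.

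The heart of the argument is the middle block $k=1,2,\ldots,m-1$. Here I would use the purely algebraic splitting of the tridiagonal coefficients into two first-order pieces: for any real numbers $b_{k-1}, b_k, b_{k+1}$ and any index $r$,
$$
l_{kr} b_{k-1} + d_{kr} b_k + u_{kr} b_{k+1} = k\big[(t_{j+1}-t_r) b_{k-1} + (t_r - t_j) b_k\big] + (m-k)\big[(t_{j+1}-t_r) b_k + (t_r - t_j) b_{k+1}\big],
$$
which follows at once from the identity $d_{kr} = k(t_r - t_j) + (m-k)(t_{j+1}-t_r)$. Applying this with $r=i$ on the left-hand side and with $r=m+i+2$ on the right-hand side of the middle equation, and recognizing that $v_i\big[(t_{j+1}-t_{m+i+2}) b^{(i+1,j)}_k + (t_{m+i+2}-t_j) b^{(i+1,j)}_{k+1}\big]$ is exactly $A(m,i,j,k)$ (and the analogous expression with index $k-1$ is $A(m,i,j,k-1)$), the inhomogeneous $A$-terms cancel and the middle equation collapses to the clean two-term relation
$$
k\,L_{k-1} + (m-k)\,L_k = 0 \qquad (k=1,2,\ldots,m-1).
$$

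Finally, I would close by induction on $k$. Starting from $L_0 = 0$, the relation for $k=1$ forces $(m-1)L_1 = 0$; since the range $i=j-1,j-2,\ldots,j-m+1$ is non-empty only when $m \geq 2$, the factor $m-k$ is nonzero for every $k=1,\ldots,m-1$, so each vanishing $L_{k-1}=0$ propagates to $L_k=0$. Hence $L_k=0$ for all $k=0,1,\ldots,m-1$, which is exactly \eqref{E:BSplineBezCase1Th}, and the boundary equation $L_{m-1}=0$ reappears as a consistent redundancy. I expect the only delicate point to be the bookkeeping in the coefficient-splitting step — verifying the factorization of $d_{kr}$ and correctly identifying the shifted right-hand expressions as $A(m,i,j,k)$ and $A(m,i,j,k-1)$ — after which the cancellation, the two-term relation, and the induction are all routine.
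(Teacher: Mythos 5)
Your proposal is correct and takes essentially the same route as the paper: the paper likewise treats the first and last equations of the system~\eqref{E:BSplineBezCase1} as the base cases and then inducts through the middle equations, and its step of subtracting the induction hypothesis scaled by $l_{k+1,i}/(t_{j+1}-t_i)=k+1$ from the $(k+2)$th equation is exactly your two-term relation $(k+1)L_k+(m-k-1)L_{k+1}=0$ after division by the nonzero factor $m-k-1$. Your residual notation $L_k$ and the explicit splitting of $d_{kr}$ merely spell out the paper's ``after some algebra,'' including the check that the leading factor never vanishes.
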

\begin{proof}
Base case ($k = 0$ and $k=m$): the relation holds and is presented in the first
and the last equations of the system~\eqref{E:BSplineBezCase1}.

Induction step ($k \rightarrow k+1$): the $(k+2)$th equation in the
system~\eqref{E:BSplineBezCase1} is
\begin{multline*}
l_{k+1,i} b^{(i,j)}_k + d_{k+1,i} b^{(i,j)}_{k+1} + u_{k+1,i} b^{(i,j)}_{k+2}=\\
 = v_i \Big(l_{k+1,m+i+2} b^{(i+1,j)}_k + d_{k+1,m+i+2} b^{(i+1,j)}_{k+1}
                                         + u_{k+1,m+i+2} b^{(i+1,j)}_{k+2}\Big).
\end{multline*}
Subtracting sidewise the induction assumption scaled by
$\dfrac{l_{k+1,i}}{(t_{j+1} - t_i)} = k+1$ gives, after some algebra,
\begin{multline*}
(t_{j+1} - t_i) b^{(i,j)}_{k+1} + (t_i - t_j) b^{(i,j)}_{k+2}
               = v_i \Big((t_{j+1} - t_{m+i+2}) b^{(i+1,j)}_{k+1}
                             + (t_{m+i+2} - t_j) b^{(i+1,j)}_{k+2}\Big),
\end{multline*}
which concludes the proof.
\end{proof}

From Theorem~\ref{T:BSplineBezCase1}, it follows that there are $m$ independent
equations in the system~\eqref{E:BSplineBezCase1}, as one of them is redundant.
One thus needs an initial value to find the values of all
$b^{(i,j)}_0, b^{(i,j)}_1, \ldots, b^{(i,j)}_m$
using the recurrence relation~\eqref{E:BSplineBezCase1Th}.

If $j=n-1$, Remark~\ref{R:LastColZero} can be used to find that
$$
N_{mi}(t_n) = b_m^{(i,n-1)} = 0 \qquad (i=n-2,n-3,\ldots,n-m).
$$
In this case, the recurrence relation given in Theorem~\ref{T:BSplineBezCase1}
simplifies to
$$
(t_n - t_i) b^{(i,n-1)}_k = (t_{n-1} - t_i) b^{(i,n-1)}_{k+1} +
                                      v_i (t_n - t_{n-1}) b^{(i+1,n-1)}_{k+1}.
$$
It means that, for $i=n-2,n-3,\ldots,n-m$, the following relation holds:
\begin{equation}\label{E:BSplineStage3Last}
\left\{
\begin{array}{ll}
  b_m^{(i,n-1)} = 0,&\\
  b^{(i,n-1)}_k = \dfrac{t_{n-1} - t_i}{t_n - t_i} b^{(i,n-1)}_{k+1} +
                    \dfrac{t_n - t_{n-1}}{t_n - t_{i+1}} b^{(i+1,n-1)}_{k+1}
                                                         & (k=m-1,m-2,\ldots,0).
\end{array}
\right.
\end{equation}

For $i=n-2,n-3,\ldots,n-m$, assuming that the coefficients $b_k^{(i+1,n-1)}$ are
known ($k=1,2,\ldots,m$), Eq.~\eqref{E:BSplineStage3Last} has an
explicit solution
\begin{equation}\label{E:BSplineStage3LastExp}
\left\{
\begin{array}{ll}
  b_m^{(i,n-1)} = 0, & \\
  b_k^{(i,n-1)} = \dfrac{t_n - t_{n-1}}{t_n - t_{i+1}}
                   {\displaystyle \sum_{\ell=0}^{m-k-1}}
                     \Big(\dfrac{t_{n-1}-t_i}{t_n - t_i}\Big)^{\ell}
                     b_{k+1+\ell}^{(i+1,n-1)}
                     & (k=0,1,\ldots,m-1).
\end{array}
\right.
\end{equation}

To find the initial value, if $j < n-1$ and $i=j-1,j-2,\ldots,j-m+1$,
the right continuity condition will be used, i.e.,
$$
N_{mi}(t_{j+1}^-) = N_{mi}(t_{j+1}^+).
$$
More precisely,
$$
N_{mi}(t_{j+1}^-) = \sum_{k=0}^m b^{(i,j)}_k B^m_k(1) = b^{(i,j)}_m
$$
and
$$
N_{mi}(t_{j+1}^+) = \sum_{k=0}^m b_k^{(i, j+1)} B^m_k(0) = b_0^{(i, j+1)},
$$
which gives the relation
\begin{equation}\label{E:InitVal}
b^{(i,j)}_m = b_0^{(i,j+1)}.
\end{equation}

This completes the recurrence scheme for $j=n-2,n-3,\ldots,0$ and
$i=j-1,j-2,\ldots,j-m+1$:
\begin{equation}\label{E:BSplineStage3}
\left\{
\begin{array}{l}
   b_m^{(i,j)} = b_0^{(i,j+1)},\\
   b_k^{(i,j)} = \dfrac{t_j - t_i}{t_{j+1} - t_i} b_{k+1}^{(i,j)}
               + \dfrac{v_i}{t_{j+1} - t_i} \Big(
                   (t_{j+1} - t_{m+i+2}) b_k^{(i+1,j)}
                 + (t_{m+i+2} - t_j) b_{k+1}^{(i+1,j)} \Big)\\
   \qquad \qquad \qquad \qquad \qquad \qquad \qquad \qquad \qquad
   \qquad \qquad \qquad (k=m-1,m-2,\ldots,0).
\end{array}
\right.
\end{equation}

From Eq.~\eqref{E:BSplineStage3} follows an explicit formula for the 
coefficients $b^{(i,j)}_k$ $(0\leq k\leq m$), assuming that the coefficients
$b_0^{(i,j+1)}$ and $b^{(i+1,j)}_k$ ($k=0,1,\ldots,m$) are known:
\begin{equation}\label{E:BSplineStage3Exp}
b^{(i,j)}_k = \Big(\dfrac{t_j - t_i}{t_{j+1} - t_i}\Big)^{m-k}
                  b_0^{(i,j+1)}
              + \sum_{\ell = 0}^{m-k-1}
                  \Big(\dfrac{t_j - t_i}{t_{j+1} - t_i}\Big)^{\ell}
                            \dfrac{v_i}{t_{j+1} - t_i} q_{k + \ell},
\end{equation}
where
$$
 q_{\ell} := (t_{j+1} - t_{m+i+2}) b_{\ell}^{(i+1,j)}
                               + (t_{m+i+2}-t_j) b_{\ell+1}^{(i+1,j)},
$$ 
and $0 \leq j \leq n-2$, $j-m+1 \leq i \leq j-1$.

The coefficients of $N_{mi}$ have been found for $j=0,1,\ldots,n-1$
and $i=j-1,j-2,\ldots,j-m+1$.

%%%%%%%%%%%%%%%%%%%%%%%%%%%%%%%%%%%%%%%%%%%%%%%%%%%%%%%%%%%%%%%%%%%%%%%%%%%%%%
\subsection{Recurrence scheme and implementation}
%%%%%%%%%%%%%%%%%%%%%%%%%%%%%%%%%%%%%%%%%%%%%%%%%%%%%%%%%%%%%%%%%%%%%%%%%%%%%%

The results presented in~\S\ref{SS:BSplineStage1} and~\S\ref{SS:BSplineStage2}
can be combined to prove the following theorem.
\begin{theorem}\label{T:BSplineBezierBig}
Let us assume that
$$
t_{-m} = t_{-m+1} = \ldots = t_0 < t_1 < \ldots < t_{n-1} <
                                              t_n = t_{n+1} = \ldots = t_{n+m}
$$
(cf.~\eqref{E:KnotClones}). The $n(m+1)^2$ adjusted Bernstein-B\'{e}zier
coefficients $b_k^{(i,j)}$ of the B-spline functions $N_{mi}$ over each knot 
span $[t_j, t_{j+1})$ (cf.~\eqref{E:BSplineABB}), for $j=0,1,\ldots,n-1$,
$i=j-m,j-m+1,\ldots,j$ and $k=0,1,\ldots,m$, can be computed in the 
computational complexity $O(nm^2)$ in the following way:
\begin{enumerate}
\itemsep1ex

\item For $j=0,1,\ldots,n-1$ and $k=0,1,\ldots,m$, the coefficients
      $b_k^{(j,j)}$ and $b_k^{(j-m,j)}$ are given explicitly in
      equations~\eqref{E:BSplineStage1Exp} and~\eqref{E:BSplineStage1LastExp},
      respectively.

\item For $j = n-1$, $i = n-2,n-3,\ldots,n-m$ and $k=m,m-1,\ldots,0$,
    %assuming that the coefficients $b_k^{(i+1,n-1)}$ are known,
      the coefficients $b_k^{(i,n-1)}$ ($k=0,1,\ldots,m$) are computed by the
      recurrence relation~\eqref{E:BSplineStage3Last} (for their explicit forms,
      see~\eqref{E:BSplineStage3LastExp}).
    
\item For $j=n-2,n-3,\ldots,0$, $i=j-1,j-2,\ldots,j-m+1$ and $k=m,m-1,\ldots,0$,
      %if the coefficients $b_k^{(i+1,j)}$ and $b_0^{(i,j+1)}$ are known,
      the coefficients $b_k^{(i,j)}$ are computed by the recurrence
      relation~\eqref{E:BSplineStage3} (for their explicit forms,
      see~\eqref{E:BSplineStage3Exp}).

\end{enumerate}
\end{theorem}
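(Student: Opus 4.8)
The plan is to read this theorem as a \emph{consolidation} of the correctness results already established in \S\ref{SS:BSplineStage1} and \S\ref{SS:BSplineStage2}, augmented by a dependency-ordering argument and an operation count. First I would check that Steps 1--3 jointly cover every required triple $(i,j,k)$. For a fixed knot span $[t_j,t_{j+1})$ the only nonzero B-spline functions are $N_{mi}$ with $j-m\le i\le j$; Step~1 supplies the two boundary indices $i=j$ and $i=j-m$ explicitly (Eqs.~\eqref{E:BSplineStage1Exp} and \eqref{E:BSplineStage1LastExp}), while Steps~2 and~3 supply the interior indices $i=j-1,j-2,\ldots,j-m+1$. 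Hence all $m+1$ functions, each with its $m+1$ coefficients, are produced, for the claimed total of $n(m+1)^2$ values. The correctness of each individual formula is exactly what was proved earlier: Step~1 follows from repeated use of Eq.~\eqref{E:BSplineRec}, and Steps~2--3 follow from Theorem~\ref{T:BSplineBezCase1} together with the two initial conditions, namely $b_m^{(i,n-1)}=0$ from Remark~\ref{R:LastColZero} (yielding \eqref{E:BSplineStage3Last}) and the continuity relation \eqref{E:InitVal} (yielding \eqref{E:BSplineStage3}).

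Next I would make the computation order precise and verify that it is acyclic, so that the scheme is well defined. The coefficients are generated by decreasing $j$, then by decreasing $i$ within each $j$, then by decreasing $k$ within each $i$, with Step~2 (the span $j=n-1$) preceding Step~3. I would then confirm that every quantity on the right-hand side of a recurrence is already available under this order. In \eqref{E:BSplineStage3Last} the term $b_{k+1}^{(i,n-1)}$ comes from the same $i$-sweep at a larger $k$, and $b_{k+1}^{(i+1,n-1)}$ from the previously processed index $i+1$, the seed $i=n-2$ drawing on the Step~1 data $b_k^{(n-1,n-1)}$. In \eqref{E:BSplineStage3} the initial value $b_0^{(i,j+1)}$ comes from the already-processed span $j+1$ (one checks that $j-m+1\le i\le j-1$ indeed lies in the admissible range $(j+1)-m\le i\le j+1$ for that span), while $b_k^{(i+1,j)}$ and $b_{k+1}^{(i+1,j)}$ come from index $i+1$ in the same span, seeded at $i=j-1$ by the Step~1 values $b_k^{(j,j)}$.

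Finally I would count operations. The decisive point is to evaluate the recurrences \eqref{E:BSplineStage3Last} and \eqref{E:BSplineStage3} directly---each producing one coefficient from an $O(1)$ number of previously computed ones---rather than the closed forms \eqref{E:BSplineStage3LastExp} and \eqref{E:BSplineStage3Exp}, whose summations would cost $O(m)$ per coefficient and inflate the total to $O(nm^3)$. Under this proviso, Step~1 costs $O(m)$ per span (dominated by the products $\prod_{k=2}^m(t_{j+k}-t_j)$ and $\prod_{k=2}^m(t_{j+1}-t_{j+1-k})$), hence $O(nm)$ overall; Step~2 handles one span with $O(m)$ interior indices and $O(m)$ coefficients each, costing $O(m^2)$; and Step~3 handles $O(n)$ spans, each with $O(m)$ interior indices and $O(m)$ coefficients, costing $O(nm^2)$. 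Summing gives $O(nm^2)$, proportional to the number of coefficients.

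The step I expect to be the main obstacle is not any single computation but precisely this bookkeeping in the complexity count: since the explicit formulas \eqref{E:BSplineStage3LastExp} and \eqref{E:BSplineStage3Exp} are displayed alongside the recurrences, one must be careful to invoke the $O(1)$ recurrences---and not the $O(m)$ summations---when establishing the optimal $O(nm^2)$ bound.
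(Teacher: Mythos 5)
Your proposal is correct and follows essentially the same route as the paper, which proves the theorem simply by combining the results of \S\ref{SS:BSplineStage1} and \S\ref{SS:BSplineStage2} and obtains the $O(nm^2)$ bound in the implementation subsection by evaluating the recurrences~\eqref{E:BSplineStage3Last} and~\eqref{E:BSplineStage3} at $O(1)$ cost per coefficient (exactly as in Algorithm~\ref{A:BSplineBezForm}), not the $O(m)$-per-coefficient closed forms. Your explicit verification of coverage of all triples $(i,j,k)$ and of the acyclic dependency order is only implicit in the paper (via the algorithm and Figure~\ref{F:BSplineBezScheme}), but it is the same argument.
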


\begin{example}\label{Ex:BSplineBez}
Let us set $m := 3$, $n := 5$. Let the knots be
$$
\begin{array}{c|c|c|c|c|c|c|c|c|c|c|c}
t_{-3} & t_{-2} & t_{-1} & t_0 & t_1 & t_2 & t_3 & t_4 &
                                           t_5 & t_6 & t_7 & t_8 \\ \hline
0 & 0 & 0 & 0 & 3 & 5 & 6 & 9 & 10 & 10 & 10 & 10
\end{array}.
$$
Figure~\ref{F:BSplineBezScheme} illustrates the approach to computing all
necessary adjusted Bernstein-B\'{e}zier coefficients of B-spline functions, 
given in Theorem~\ref{T:BSplineBezierBig}. Arrows denote recursive dependence.
Diagonally striped squares are computed using Eq.~\eqref{E:BSplineStage1Exp}.
Horizontally striped squares are computed using
Eq.~\eqref{E:BSplineStage1LastExp}.
%    The dotted square is computed using~\eqref{E:BSplineStage2}.
White squares are computed using either the
recurrence~\eqref{E:BSplineStage3Last} (for $u \in [t_4, t_5)$)
or~\eqref{E:BSplineStage3} (for $u < t_4$).
\end{example}

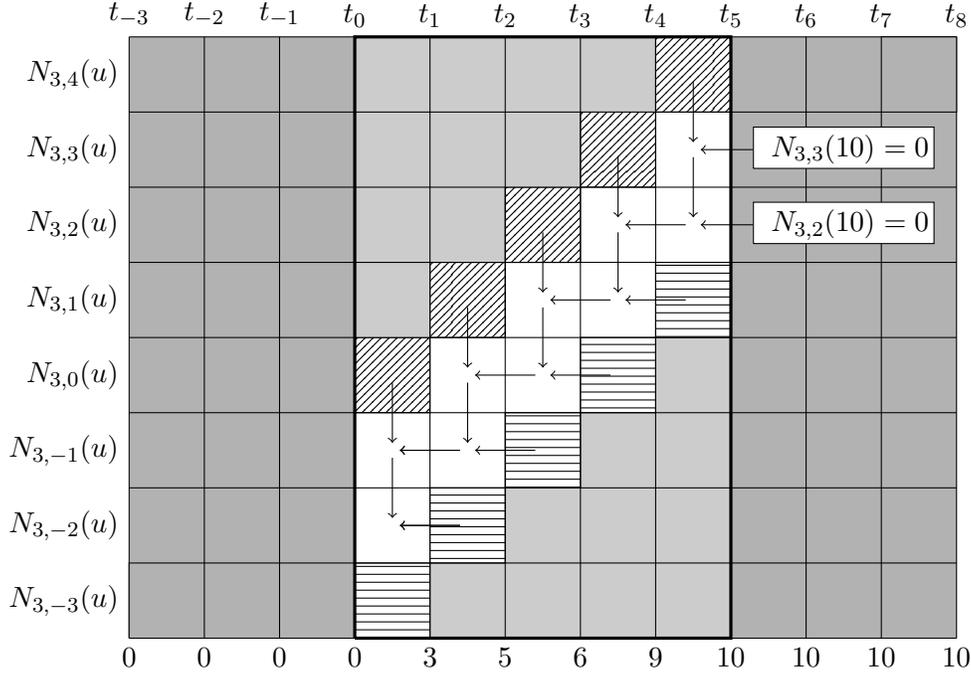
\begin{figure*}[ht!]
\centering
\begin{tikzpicture}
\foreach \col in {0,...,7}
 {
 \fill[black!20!white] (0, \col) -- (\col, \col) -- (\col, \col + 1) --
                       (0, \col + 1) -- (0, \col);
 \draw[pattern=north east lines] (\col, \col) -- (\col + 1, \col) --
                      (\col + 1, \col + 1) -- (\col, \col + 1) -- (\col, \col);
 \fill[black!20!white] (\col + 4, \col) -- (11, \col) -- (11, \col + 1) --
                       (\col + 4, \col + 1) -- (\col + 4, \col);
 \draw[pattern=horizontal lines] (\col+3, \col) -- (\col+4, \col) --
                      (\col+4, \col+1) -- (\col+3, \col+1) -- (\col+3, \col);
 }
 \fill[black!30!white] (0, 0) -- (3, 0) -- (3, 8) -- (0, 8) -- (0, 0);
 \fill[black!30!white] (8, 0) -- (11, 0) -- (11, 8) -- (8, 8) -- (8, 0);

%  \draw[pattern=dots] (7,6) -- (8,6) -- (8,7) -- (7,7) -- (7,6);

 \foreach \row in {0,...,11}
 \draw (\row, 0) -- (\row, 8);
\foreach \col in {0,...,8}
 \draw (0, \col) -- (11, \col);

 \node[left] at (0, 0.5) {$N_{3, -3}(u)$};
 \node[left] at (0, 1.5) {$N_{3, -2}(u)$};
 \node[left] at (0, 2.5) {$N_{3, -1}(u)$};
 \node[left] at (0, 3.5) {$N_{3, 0}(u)$};
 \node[left] at (0, 4.5) {$N_{3, 1}(u)$};
 \node[left] at (0, 5.5) {$N_{3, 2}(u)$};
 \node[left] at (0, 6.5) {$N_{3, 3}(u)$};
 \node[left] at (0, 7.5) {$N_{3, 4}(u)$};

 \node[below] at (0,0) {$0$};
 \node[below] at (1,0) {$0$};
 \node[below] at (2,0) {$0$};
 \node[below] at (3, 0) {$0$};
 \node[below] at (4,0) {$3$};
 \node[below] at (5,0) {$5$};
 \node[below] at (6,0) {$6$};
 \node[below] at (7,0) {$9$};
 \node[below] at (8,0) {$10$};
 \node[below] at (9,0) {$10$};
 \node[below] at (10,0) {$10$};
 \node[below] at (11, 0) {$10$};

 \foreach \row in {-3,-2, -1, 1, 2, 3, 4, 6, 7, 8}
  {
    \node[above] at (\row + 3, 8) {$t_{\row}$};
  }

 \draw[very thick] (3, 0) -- (8, 0) -- (8, 8) -- (3, 8) -- (3, 0);
 \node[above] at (3, 8) {$t_{0}$};
 \node[above] at (8, 8) {$t_{5}$};

 \foreach \row in {3.5, 4.5, 5.5, 6.5, 7.5}
 {
   \draw[->] (\row, \row - 0.1) -- (\row, \row - 0.9);
   \draw[->] (\row, \row - 1.1) -- (\row, \row - 1.9);
 }
 \draw[->] (4.4, 2.5) -- (3.6, 2.5);
 \draw[->] (4.4, 1.5) -- (3.6, 1.5);
 \foreach \row in {3.5, 4.5, 5.5, 6.5}
 {
   \draw[->] (\row + 0.9, \row - 1) -- (\row + 0.1, \row - 1);
   \draw[->] (\row + 0.9, \row - 2) -- (\row + 0.1, \row - 2);
   %\draw[->] (\row + 0.9, \row - 3) -- (\row + 0.1, \row - 3);
 }
 %%\draw[->] (11.5, 4.5) -- (7.6, 4.5);
 %\draw[->] (11.2, 5.5) -- (7.6, 5.5);
 %\draw[->] (11.2, 6.5) -- (7.6, 6.5);
 \draw[->] (9.2, 5.5) -- (7.6, 5.5);
 \draw[->] (9.2, 6.5) -- (7.6, 6.5);

 %%\node[right] at (11.5, 4.5) {$N_{3,1}(10) = 0$};
 %\node[right] at (11.25, 5.5) {$N_{3,2}(10) = 0$};
 %\node[right] at (11.25, 6.5) {$N_{3,3}(10) = 0$};
 \fill[white] (8.3, 5.25) -- (10.7, 5.25) -- (10.7, 5.8) -- (8.3, 5.8) -- 
                                                            (8.3, 5.25); % RAMKA
 \draw (8.3, 5.25) -- (10.7, 5.25) -- (10.7, 5.8) -- (8.3, 5.8) -- (8.3, 5.25); 
                                                                         % RAMKA
 \node[right] at (8.4, 5.5) {$N_{3,2}(10) = 0$};
 \fill[white] (8.3, 6.25) -- (10.7, 6.25) -- (10.7, 6.8) -- (8.3, 6.8) -- 
                                                            (8.3, 6.25); % RAMKA
 \draw (8.3, 6.25) -- (10.7, 6.25) -- (10.7, 6.8) -- (8.3, 6.8) -- (8.3, 6.25); 
                                                                         % RAMKA
 \node[right] at (8.4, 6.5) {$N_{3,3}(10) = 0$};

 \end{tikzpicture}
\caption{An illustration of Example~\ref{Ex:BSplineBez}.}
\label{F:BSplineBezScheme}
\end{figure*}

%%%%%%%%%%%%%%%%%%%%%%%%%%%%%%%%%%%%%%%%%%%%%%%%%%%%%%%%%%%%%%%%%%%%%%%%%%%%%%
\subsubsection{Implementation}
%%%%%%%%%%%%%%%%%%%%%%%%%%%%%%%%%%%%%%%%%%%%%%%%%%%%%%%%%%%%%%%%%%%%%%%%%%%%%%
 
Algorithm~\ref{A:BSplineBezForm} implements the approach proposed in
Theorem~\ref{T:BSplineBezierBig}. This algorithm returns a sparse array 
$B\equiv B[0..n-1,-m..n-1,0..m]$, where
$$
B[j,i,k]=b^{(i,j)}_k\qquad (0\leq j<n,\; -m\leq i<n,\; 0\leq k\leq m)
$$
(cf.~\eqref{E:BSplineABB}).

For each of the $n$ knot spans, one has to compute the coefficients of $m+1$
functions ($n(m+1)^2$ coefficients in total). Computing all coefficients of one
B-spline function in a given knot span requires $O(m)$ operations. In total,
then, the complexity of Algorithm~\ref{A:BSplineBezForm} is $O(nm^2)$ ---
giving the optimal $O(1)$ time per coefficient.

\begin{algorithm}[ht!]
\caption{Computing the coefficients of the adjusted Bernstein-B\'{e}zier form
         of the B-spline functions}\label{A:BSplineBezForm}
\begin{algorithmic}[1]
\Procedure {BSplineBBF}{$n, m, [t_{-m}, t_{-m+1}, \ldots, t_{n+m}]$}
\State $B \gets \texttt{SparseArray[0..n-1, -m..n-1, 0..m](fill=0)}$
\For {$j \gets 0,n-1$}
  \State $B[j, j, m] \gets \dfrac{(t_{j+1}-t_j)^{m-1}}
                                 {\prod_{k=2}^m(t_{j+k}-t_j)}$
  \State $B[j,j-m,0] \gets \dfrac{(t_{j+1} - t_j)^{m-1}}
                                 {\prod_{k=2}^{m}(t_{j+1} - t_{j+1-k})}$
\EndFor
\For {$i \gets n-2, n-m$}
  \For {$k \gets m-1,0$}
    \State $B[n-1, i, k] \gets\dfrac{t_{n-1}-t_i}{t_n - t_i}\cdot B[n-1,i,k+1] +
                \dfrac{t_n-t_{n-1}}{t_n - t_{i+1}}\cdot B[n-1, i+1, k+1]$
  \EndFor
\EndFor
\For {$j \gets n-2, 0$}
  \For {$i \gets j-1, j-m+1$}
    \State $v \gets \dfrac{t_{m+i+1} - t_i}{t_{m+i+2} - t_{i+1}}$
    \State $B[j, i, m] \gets B[j+1,i,0]$
    \For {$k=m-1,0$}
      \State $B[j, i, k] \gets \dfrac{t_j-t_i}{t_{j+1}-t_i}\cdot
      B[j,i,k+1]+\dfrac{v}{t_{j+1}-t_i}\cdot \Big((t_{j+1}-t_{m+i+2})\cdot
      B[j,i+1,k] + (t_{m+i+2} - t_j)\cdot B[j,i+1,k+1]\Big)$
    \EndFor
  \EndFor
\EndFor
\State \Return $B$
\EndProcedure
\end{algorithmic}
\end{algorithm}

\section{Applications}\label{S:Applications}
%%%%%%%%%%%%%%%%%%%%%%%%%%%%%%%%%%%%%%%%%%%%%%%%%%%%%%%%%%%%%%%%%%%%%%%%%%%%%%
\subsection{Fast computation of multiple points on multiple B-spline curves}
                                                     \label{SS:BSplineFastComp}
%%%%%%%%%%%%%%%%%%%%%%%%%%%%%%%%%%%%%%%%%%%%%%%%%%%%%%%%%%%%%%%%%%%%%%%%%%%%%%

Let $u \in [t_j, t_{j+1})$ and $t := \dfrac{u-t_j}{t_{j+1}-t_j}$. By solving
Problem~\ref{P:BSpline1}, the Bernstein-B\'{e}zier coefficients of the B-spline
functions are found. A point on a B-spline curve~\eqref{E:BSplineCurve} can thus
be expressed as
$$
\p{S}(u) = \sum_{i=j-m}^{j}\Big(\sum_{k=0}^m b_k^{(i,j)} B^m_k(t)\Big)\p{W}_i.
$$

The inner sums
\begin{equation}\label{E:Def_pi}
p_i(u) := \sum_{k=0}^m b_k^{(i,j)} B^m_k(t)\equiv N_{mi}(u) 
                                                 \qquad (i=j-m,j-m+1,\ldots,j)
\end{equation}
can be treated as polynomial B\'{e}zier curves with control points
$b_k^{(i,j)} \in \mathbb{E}^1$ and thus can be computed using the geometric
algorithm given in~\cite{WCh2020} in total time $O(m^2)$ --- more precisely,
$O(m)$ per each of $m+1$ sums. It also means that --- when all the coefficients
$b_k^{(i,j)}$ are already known --- any B-spline function may be computed in 
linear time with respect to its degree. 

\begin{example}\label{E:BSplineExampleFun}
A comparison of the new method of evaluating B-spline functions and using
recurrence relation~\eqref{E:BSplineRec} has been done. The results have been
obtained on a computer with \texttt{Intel Core i5-6300U CPU} at \texttt{2.40GHz}
processor and \texttt{4GB} \texttt{RAM}, using \texttt{GNU C Compiler 11.2.0}
(single precision).

For each $n \in \{10, 15, 20, 25, 30, 35, 40, 45, 50\}$, and 
$m=3, 4,\ldots, 15$, a sequence of knots has been generated $100$ times. The knot
span lengths  $t_{j+1} - t_j \in [1/50, 1]$ ($j=0,1,\ldots,n-1;\; t_0 = 0$) have
been generated using the \texttt{rand()} C function. The boundary knots are
coincident. Then, $50 \cdot n + 1$ points such that 
$t_{j \ell} := t_j + \ell/50 \times (t_{j+1} - t_j)$ for $j=0,1,\ldots,n-1$ and
$\ell=0,1,\ldots,49$, with the remaining point being $t_{n0} \equiv t_n$, are
generated.

At each point $t_{j \ell} \in [t_j, t_{j+1})$, all $m+1$ B-spline functions
$N_{mi}$ ($i = j - m, j - m + 1, \ldots, j$) which do not vanish at $t_{j \ell}$
are evaluated using both algorithms. Due to the size of the table, the resulting
running times are available at 
\url{https://www.ii.uni.wroc.pl/~pwo/programs/BSpline-BF-Example-5-1.xlsx}.

The new method consistently performs faster than evaluating B-spline functions
using recurrence relation~\eqref{E:BSplineRec}. The new method reduced the 
running time for any dataset by $29$--$46\%$, while the total running time was
reduced by $45\%$. The source code in C which was used to perform the tests is
available at \url{https://www.ii.uni.wroc.pl/~pwo/programs/BSpline-BF.c}.
\end{example}

Note that the sums $p_i$ (cf.~\eqref{E:Def_pi}) do not depend on the control
points. Afterwards, computing a convex combination of $m+1$ points from
$\mathbb{E}^d$, i.e.,
$$
\p{S}(u) = \sum_{i=j-m}^{j} p_i(u) \p{W}_i \qquad (u \in [t_j, t_{j+1})),
$$
requires $O(md)$ arithmetic operations. Observe that these values may also be
computed using the geometric method proposed in~\cite[Algorithm 1.1]{WCh2020}.
In total, then, assuming that the Bernstein-B\'{e}zier coefficients of the
B-spline functions over each knot span $[t_j, t_{j+1})$ ($j=0,1,\ldots,n-1$) are
known, $O(m (m + d))$ arithmetic operations are required to compute a point
$\p{S}(u)$ ($u \in [t_j, t_{j+1})$) on a B-spline curve.

When it is required to compute the values of $\p{S}$ for many parameters
$u_0,u_1,\ldots,u_N$, one would have to perform $O(nm^2)$ arithmetic operations
to find the Bernstein-B\'{e}zier coefficients of the B-spline functions over 
each knot span and then do $O(m(m + d))$ operations for each of $N+1$ points 
that are to be computed. In total, the computational complexity of this approach
is $O(nm^2 + Nm(m+d))$.

Due to the fact that the sums $p_i$ (cf.~\eqref{E:Def_pi}) do not depend on the
control points, they can be used for computing a point on multiple B-spline
curves, all of degree $m$, with the same knots.
\begin{problem}\label{P:BSpline3}
For $M$ B-spline curves $\p{S}_0, \p{S}_1, \ldots, \p{S}_{M-1}$ with the knots
$$
t_{-m} = t_{-m+1} = \ldots = t_0 < t_1 < \ldots < t_n =
                                                    t_{n+1} = \ldots = t_{n+m}.
$$
and the control points of $\p{S}_k$ being
$$
\p{W}_{k,-m},\p{W}_{k,-m+1},\ldots,\p{W}_{k,n-1}\in{\mathbb E^d} 
                                                     \qquad (k=0,1,\ldots,M-1),
$$
compute the value of each of the B-spline curves $\p{S}_k$ at points
$u_0, u_1,\ldots,u_{N-1}$ such that $t_0 \leq u_k \leq t_n$ for all
$k=0,1,\ldots,N-1$. More precisely, for $k=0,1,\ldots,M-1$ and
$\ell=0,1,\ldots,N-1$, compute all the points $\p{S}_k(u_{\ell})$.
\end{problem}
One can efficiently solve Problem~\ref{P:BSpline3} in the following way.
Using Algorithm~\ref{A:BSplineBezForm} allows to compute all the adjusted
Bernstein-B\'{e}zier coefficients of B-spline functions
(cf.~Problem~\ref{P:BSpline1}) in $O(nm^2)$ time. Now, one needs to compute the
values
$$
p_i(u_{\ell}) \qquad (\ell=0,1,\ldots,N-1,\;u_{\ell} \in [t_j, t_{j+1}),\;
                      i=j-m,j-m+1,\ldots,j)
$$
(cf.~\eqref{E:Def_pi}), which takes $O(Nm^2)$ time. Using these values,
computing
$$
\p{S}_k(u_{\ell}) = \sum_{i=j-m}^{j} p_i(u_{\ell}) \p{W}_{ki}
   \qquad (\ell=0,1,\ldots,N-1,\;k=0,1,\ldots,M-1,\;u_{\ell}\in [t_j, t_{j+1}))
$$
takes $O(MNmd)$ time (see \cite{WCh2020}). In total, then, the complexity of this
approach is $O(nm^2 + Nm^2 + NMmd)$, compared to the complexity of using the
de~Boor-Cox algorithm to solve Problem~\ref{P:BSpline3}, i.e., $O(NMm^2d)$.

Using the recurrence relation~\eqref{E:BSplineRec} (see~\cite[p.~55--57]{DeBoor1972})
to evaluate the B-spline basis functions in $O(Nm^2)$ and then compute
the linear combinations of control points in $O(NMmd)$ time gives a total
complexity of $O(Nm^2 + NMmd)$. While this may appear similar
to the complexity of the new method, a closer examination of the number
of floating-point operations shows that the new method
has $O(Nm)$ divisions, compared to $O(Nm^2)$
in the approach based on
the relation~\eqref{E:BSplineRec}.
In practice, this
saves $O(Nm^2)$ divisions (see Table~\ref{T:TableOps}).

A comparison of running times is given in Example~\ref{E:BSplineExample}.
The new algorithm is compared to executing the de Boor-Cox algorithm 
and to an alternative way of computing the B-spline functions based on 
the recurrence relation~\eqref{E:BSplineRec} (see~\cite[p.~55--57]{DeBoor1972})
and then evaluating the point in the same way as in the new method.

\begin{example}\label{E:BSplineExample}
Table~\ref{T:TableBS} shows the comparison between the running times of the de
Boor-Cox algorithm, an algorithm which computes the values of B-spline function
using the recurrence relation~\eqref{E:BSplineRec} and then computes the points,
and the new method described above and using Algorithm~\ref{A:BSplineBezForm}.

The results have been obtained on a computer with
\texttt{Intel Core i5-6300U CPU} at \texttt{2.40GHz} processor and \texttt{4GB}
\texttt{RAM}, using \texttt{GNU C Compiler 11.2.0} (single precision).

The following numerical experiments have been conducted. For fixed $n=20$ and
$d=2$, for each $M \in \{1, 5, 10, 20, 50, 100\}$ and $m \in \{3, 5, 7, 9, 11\}$,
a sequence of knots and control points has been generated $100$ times.
The control points $\p{W}_{ki} \in [-1, 1]^d$ 
$(i=-m,-m+1,\ldots,n-1,\; k=0,1,\ldots,M-1$) and the knot span lengths 
$t_{j+1} - t_j \in [1/50, 1]$ ($j=0,1,\ldots,n-1;\; t_0 = 0$) have been generated
using the \texttt{rand()} C function. The boundary knots are coincident. Each
algorithm is then tested using the same knots and control points. Each curve is
evaluated at $1001$ points which are $t_j + \ell/50 \times (t_{j+1} - t_j)$ for
$j=0,1,\ldots,n-1$ and $\ell=0,1,\ldots,49$, with the remaining point being 
$t_n$. Table~\ref{T:TableBS} shows the total running time of all 
$100 \times 1001 \times M$ curve evaluations for each method.

On average,
the method \texttt{eval splines} had $7.52$,
while \texttt{new method} had $7.24$ common digits
with the result of the
numerically stable
de Boor-Cox algorithm in single precision
(8 digits) computations.
\end{example}

\begin{table*}[ht!]\small
\begin{center}
\renewcommand{\arraystretch}{1.3}
\begin{tabular}{lcccc}
$M$ & $m$ & de~Boor-Cox & eval splines & new method \\ \hline
1 & 3 & \textbf{0.017} & 0.027 & 0.019 \\
1 & 5 & 0.033 & 0.046 & \textbf{0.030} \\
1 & 7 & 0.056 & 0.073 & \textbf{0.046} \\
1 & 9 & 0.090 & 0.110 & \textbf{0.067} \\
1 & 11 & 0.129 & 0.151 & \textbf{0.089} \\\hline
5 & 3 & 0.076 & 0.045 & \textbf{0.038} \\
5 & 5 & 0.161 & 0.077 & \textbf{0.061} \\
5 & 7 & 0.281 & 0.116 & \textbf{0.086} \\
5 & 9 & 0.445 & 0.161 & \textbf{0.115} \\
5 & 11 & 0.643 & 0.212 & \textbf{0.147} \\\hline
10 & 3 & 0.151 & 0.078 & \textbf{0.065} \\
10 & 5 & 0.323 & 0.116 & \textbf{0.097} \\
10 & 7 & 0.562 & 0.167 & \textbf{0.135} \\
10 & 9 & 0.890 & 0.224 & \textbf{0.174} \\
10 & 11 & 1.285 & 0.287 & \textbf{0.218} \\\hline
20 & 3 & 0.302 & 0.142 & \textbf{0.115} \\
20 & 5 & 0.645 & 0.194 & \textbf{0.171} \\
20 & 7 & 1.126 & 0.276 & \textbf{0.231} \\
20 & 9 & 1.775 & 0.350 & \textbf{0.293} \\
20 & 11 & 2.568 & 0.438 & \textbf{0.358} \\\hline
50 & 3 & 0.754 & 0.333 & \textbf{0.267} \\
50 & 5 & 1.612 & 0.428 & \textbf{0.391} \\
50 & 7 & 2.810 & 0.579 & \textbf{0.517} \\
50 & 9 & 4.450 & 0.729 & \textbf{0.648} \\
50 & 11 & 6.423 & 0.889 & \textbf{0.781} \\\hline
100 & 3 & 1.510 & 0.655 & \textbf{0.524} \\
100 & 5 & 3.225 & 0.822 & \textbf{0.760} \\
100 & 7 & 5.622 & 1.099 & \textbf{1.001} \\
100 & 9 & 8.900 & 1.369 & \textbf{1.247} \\
100 & 11 & 12.840 & 1.652 & \textbf{1.494} \\\hline
\end{tabular}
\renewcommand{\arraystretch}{1}
%\vspace{2ex}
\caption{Running times comparison (in seconds) for
Example~\ref{E:BSplineExample}. The source code in C which was used to perform
the tests is available at
\url{https://www.ii.uni.wroc.pl/~pwo/programs/BSpline-BF.c}.}\label{T:TableBS}
\vspace{-3ex}
\end{center}
\end{table*}

\begin{table*}[ht!]\small
\begin{center}
\renewcommand{\arraystretch}{1.3}
\begin{tabular}{l|c|c}
operation~type & new method \\ \hline
$+$ & $(m-1)m(2n-1)+N(m+2)m+NM(m+1)d$ \\ \hline
$-$ & $2(m-1)(4n-1)+n+N((m+1)m+3)$ \\ \hline
$*$ & $2(m-1)m(2n-1)+2N(m+2)m+NM(m+1)d$ \\ \hline
$/$ & $2(m-1)(3n-1)+N(m+2)$ \\ \hline
$(\cdot)^{m-1}$ & $n$
\end{tabular}
\renewcommand{\arraystretch}{1}
%\vspace{2ex}
\caption{The number of floating-point operations
performed by the new method.}\label{T:TableOps}
\vspace{-3ex}
\end{center}
\end{table*}

% \begin{table*}[ht!]\small
% \begin{center}
% \renewcommand{\arraystretch}{1.3}
% \begin{tabular}{l|c|c}
% operation~type & eval splines & new method \\ \hline
% $+$ & $Nm(m+3)+NM(m+1)d$ & $(m-1)m(2n-1)+N(m+2)m+NM(m+1)d$ \\ \hline
% $-$ & $2Nm(m+3)$ & $2(m-1)(4n-1)+n+N((m+1)m+3)$ \\ \hline
% $*$ & $Nm(m+3)+NM(m+1)d$ & $2(m-1)m(2n-1)+2N(m+2)m+NM(m+1)d$ \\ \hline
% $/$ & $Nm(m+3)$ & $2(m-1)(3n-1)+N(m+2)$ \\ \hline
% $(\cdot)^{m-1}$ & 0 & $n$
% \end{tabular}
% \renewcommand{\arraystretch}{1}
% %\vspace{2ex}
% \caption{Comparison of the number of floating-point operations between
% the method eval splines and the new method.}\label{T:TableOps}
% \vspace{-3ex}
% \end{center}
% \end{table*}

% \begin{table*}[ht!]\small
% \begin{center}
% \renewcommand{\arraystretch}{1.3}
% \begin{tabular}{l|c}
% operation~type & difference: eval splines - new method \\ \hline
% $+$ & $Nm - (m-1)m(2n-1)$ \\ \hline
% $-$ & $N(m^2 + 5m- 3) - 2(m-1)(4n-1) - n$ \\ \hline
% $*$ & $-N(m^2 + m) - 2(m-1)m(2n-1)$ \\ \hline
% $/$ & $N(m^2 + 2m - 2) - 2(m-1)(3n-1)$ \\ \hline
% $(\cdot)^{m-1}$ & $-n$ \\ \hline
% total & $N(m^2 + 7m - 5) - (m-1)(3m + 8)(2n-1) - 2(m-1) - n - 2 n \lceil \log_2 m \rceil$
% \end{tabular}
% \renewcommand{\arraystretch}{1}
% %\vspace{2ex}
% \caption{Difference between the numer of floating-point operations between
% the method eval splines and the new method. In total, the new method
% saves $O(Nm^2)$ floating-point operations.
% The logarithm in the total difference comes from
% the classic exponentiation by squaring algorithm.}\label{T:TableOps2}
% \vspace{-3ex}
% \end{center}
% \end{table*}

\begin{example}\label{E:BSplineExample2}
An experiment similar to Example~\ref{E:BSplineExample}, with a wider choice
of parameters, has been performed. The results have been obtained on the same
computer, software, and precision. More precisely, for each $d \in \{1, 2, 3\}$, 
$n \in \{10, 15, 20, 25, 30, 35, 40, 45, 50\}$, 
$M \in \{1, 2, 3, 4, 5, 10, 15, 20, 25, 30, 50, 100\}$ and $m = 3,4,\ldots,15$, 
a sequence of knots and control points has been generated $100$ times. The 
control points  $\p{W}_{ki} \in [-1, 1]^d$ 
($i=-m,-m+1,\ldots,n-1$, $k=0,1,\ldots,M-1$) and the knot span lengths 
$t_{j+1} - t_j \in [1/50, 1]$ ($j=0,1,\ldots,n-1;\; t_0 = 0$) have been generated
using the \texttt{rand()} C function. The boundary knots are coincident. Each
algorithm is then tested using the same knots and control points. Each curve is
evaluated at $50 \cdot n + 1$ points which are 
$t_j + \ell/50 \times (t_{j+1} - t_j)$ for $j=0,1,\ldots,n-1$ and
$\ell=0,1,\ldots,49$, with the remaining point being $t_n$. Due to the size of 
the table, the resulting running times are available at
\url{https://www.ii.uni.wroc.pl/~pwo/programs/BSpline-BF-Example-5-4.xlsx}.

The results show that the new method is significantly faster than the de Boor-Cox
algorithm except for the case $M = 1$. While the acceleration with respect to the
approach which utilizes Eq.~\eqref{E:BSplineRec} is smaller, it is also
consistent, getting lower running time in all the test cases.

Some statistics regarding the experiments are given in Table~\ref{T:TableBS2}.
\end{example}

\begin{table*}[ht!]\small
\begin{center}
\renewcommand{\arraystretch}{1.45}
\begin{tabular}{lcc}
Algorithm & Total running time [s] & Relative to new method\\\hline
de Boor-Cox & 13993.58 & 6.80\\
eval splines & 2482.96 & 1.21\\
new method & 2058.95 & ---
\end{tabular}\\[2ex]
\begin{tabular}{lccc}
Algorithm & New method win \% & 
\begin{minipage}[t]{2.25cm}
Max time rel.\\
to new method%\\[-1.5ex]
%\;
\end{minipage}\rule[-5mm]{0mm}{4mm}&
\begin{minipage}[t]{2.25cm}
Min time rel.\\
to new method
\end{minipage}\\ \hline
de Boor-Cox & 97.01\% & 11.686 & 0.664 \\
eval splines & 100.00\% & 1.877 & 1.056
\end{tabular}
\renewcommand{\arraystretch}{1}
\vspace{2ex}
\caption{Statistics for Example~\ref{E:BSplineExample2}.
The source code in C which was used to perform the tests is available at
\url{https://www.ii.uni.wroc.pl/~pwo/programs/BSpline-BF.c}.}\label{T:TableBS2}
\end{center}
\end{table*}

\subsection{Evaluating a tensor product B-spline surface}\label{SS:SurfaceFastComp}
Let $n_1, n_2, m_1, m_2 \in \mathbb{N}$. Let
$$
T := (t_{-m_1}, t_{-m_1+1}, \ldots, t_{n_1+m_1}),\quad
V := (v_{-m_2}, v_{-m_2+1}, \ldots, v_{n_2+m_2})
$$
be the knot sequences.
For $i = -m_1, -m_1+1, \ldots, n_1-1$ and $\ell = -m_2, -m_2+1, \ldots, n_2-1$,
let $N_{m_1,i}(u; T)$ be a B-spline function of degree $m_1$ with the knot sequence $T$,
and $N_{m_2, \ell}(w; V)$ be a B-spline function of degree $m_2$ with the knot sequence $V$.
The \textit{tensor product B-spline surface} $\p{S}$
with control points $\p{W}_{i \ell} \in \mathbb{E}^d$
is given by the following formula:
\begin{equation}\label{E:BSplineSurf}
 \p{S}(u,w) := \sum_{i=-m_1}^{n_1-1} \sum_{\ell=-m_2}^{n_2-1} \p{W}_{i \ell} N_{m_1, i}(u; T) N_{m_2, \ell}(w; V),
\end{equation}
with
$(u, w) \in D \equiv [t_0, t_{n_1}] \times [v_0, v_{n_2}]$.

Let $(u,w) \in [t_{j_1}, t_{j_1+1}) \times [v_{j_2}, v_{j_2 + 1})$
for $0 \leq j_1 < n_1$ and $0 \leq j_2 < n_2$.
Then, Eq.~\eqref{E:BSplineSurf} simplifies to
\begin{equation}\label{E:BSplineSurfSimp}
 \p{S}(u,w) = \sum_{i=j_1-m_1}^{j_1} \sum_{\ell=j_2-m_2}^{j_2-1} \p{W}_{i \ell} N_{m_1, i}(u; T) N_{m_2, \ell}(w; V).
\end{equation}

If one needs to evaluate the B-spline surface $\p{S}$ at $N_1 \cdot N_2$ points
$$\{(u_i, w_j): 0 \leq i < N_1 \wedge 0 \leq j < N_2\} \subset D,$$
a similar approach to the one shown for B-spline curves can be used, i.e.,
\begin{enumerate}
 \item For all non-empty knot spans, find the Bernstein-B\'{e}zier coefficients
       of the B-spline basis functions
       in both dimensions --- $O(n_1 m_1^2 + n_2 m_2^2)$ operations.
 \item For each $u_i$ $(0 \leq i < N_1)$, evaluate $N_{m_1, \ell}(u_i; T)$
       for such $\ell$ that the corresponding B-spline basis functions
       do not vanish. For each $w_i$ $(0 \leq i < N_2)$, evaluate $N_{m_2, \ell}(w_i; V)$
       for such $\ell$ that the corresponding B-spline basis functions
       do not vanish. The evaluation can be done using the algorithm given in~\cite{WCh2020}.
       In total, this requires $O(N_1 m_1^2 + N_2 m_2^2)$ operations.
 \item For each $(u,w) \in \{(u_i, w_j): 0 \leq i < N_1 \wedge 0 \leq j < N_2\}$,
       evaluate $\p{S}(u,w)$ --- $O(N_1 N_2 m_1 m_2 d)$ operations.
\end{enumerate}
In total, this procedure requires
$O((N_1 + n_1) m_1^2 + (N_2 + n_2) m_2^2 + N_1 N_2 m_1 m_2 d)$ operations.

\begin{example}\label{Ex:BSplineSurf}
Table~\ref{T:TableBSSurf} shows the comparison between the running times of
the evaluation of points ona a tensor product B-spline surface using the de
Boor-Cox algorithm and the new method based of Algorithm~\ref{A:BSplineBezForm}.

The results have been obtained on a computer with
\texttt{Intel Core i5-6300U CPU} at \texttt{2.40GHz} processor and \texttt{4GB}
\texttt{RAM}, using \texttt{GNU C Compiler 11.2.0} (single precision).

The following numerical experiments have been conducted. For $d = 3$
and $n_1 \in \{10, 30, 50\},$ $n_2 = n_1$, $m_1 \in \{3,5,7,9\}$,
$m_2 \in \{m_1 - 2, m_1\}$,
sequences of knots $T,V$ and control points have been generated $10$ times.
The control points $\p{W}_{i \ell} \in [-1, 1]^3$
$(i=-m_1,-m_1+1,\ldots,n_1-1,\; \ell=-m_2, -m_2+1, \ldots, n_2-1$) and the knot span lengths
$t_{j_1+1} - t_{j_1} \in [1/50, 1]$ ($j_1=0,1,\ldots,n_1-1;\; t_0 = 0$),
$v_{j_2+1} - v_{j_2} \in [1/50, 1]$ ($j_2=0,1,\ldots,n_2-1;\; v_0 = 0$)
have been generated
using the \texttt{rand()} C function. The boundary knots are coincident. Each
algorithm is then tested using the same knots and control points.
Each non-empty knot span in $T$ and $V$ has been sampled $50$ times,
with uniform distances between samples.
Table~\ref{T:TableBSSurf} shows the total running time of all
$10 \cdot (50 n_1 + 1) \cdot (50 n_2 + 1)$ surface evaluations for each method.

Numerical tests show that the new method has,
on average, between $7.19$ (for $n_1 = n_2 = 50$)
and $7.27$ (for $n_1 = n_2 = 10$) common digits
with the result of the
numerically stable
de Boor-Cox algorithm in single precision
($8$ digits) computations.

Table~\ref{T:TableBS2Surf} shows some statistics for the experiment.
\end{example}

\begin{table*}[ht!]\small
\begin{center}
\renewcommand{\arraystretch}{1.3}
\begin{tabular}{ccccc}
$n_1 = n_2$ & $m_1$ & $m_2$ & de~Boor-Cox & new method \\ \hline
10 & 3 & 1 & 1.185 & 0.477 \\
10 & 3 & 3 & 2.610 & 0.879 \\
10 & 5 & 3 & 4.224 & 1.344 \\
10 & 5 & 5 & 7.766 & 1.958 \\
10 & 7 & 5 & 10.796 & 2.592 \\
10 & 7 & 7 & 17.385 & 3.657 \\
10 & 9 & 7 & 22.301 & 4.568 \\
10 & 9 & 9 & 32.807 & 5.362 \\ \hline
30 & 3 & 1 & 10.528 & 4.288 \\
30 & 3 & 3 & 23.451 & 8.010 \\
30 & 5 & 3 & 38.009 & 12.190 \\
30 & 5 & 5 & 69.644 & 17.699 \\
30 & 7 & 5 & 96.775 & 23.408 \\
30 & 7 & 7 & 155.907 & 32.924 \\
30 & 9 & 7 & 200.379 & 41.209 \\
30 & 9 & 9 & 294.417 & 48.231 \\ \hline
50 & 3 & 1 & 29.060 & 11.898 \\
50 & 3 & 3 & 65.101 & 22.325 \\
50 & 5 & 3 & 105.349 & 33.948 \\
50 & 5 & 5 & 193.328 & 49.205 \\
50 & 7 & 5 & 268.871 & 65.005 \\
50 & 7 & 7 & 432.882 & 91.607 \\
50 & 9 & 7 & 554.807 & 114.142 \\
50 & 9 & 9 & 817.361 & 134.059 \\ \hline
\end{tabular}
\renewcommand{\arraystretch}{1}
%\vspace{2ex}
\caption{Running times comparison (in seconds) for
Example~\ref{Ex:BSplineSurf}. The source code in C which was used to perform
the tests is available at
\url{https://www.ii.uni.wroc.pl/~pwo/programs/BSpline-BF-Surf.c}.}\label{T:TableBSSurf}
\vspace{-3ex}
\end{center}
\end{table*}

\begin{table*}[ht!]\small
\begin{center}
\renewcommand{\arraystretch}{1.45}
\begin{tabular}{lcccc}
Algorithm & Total running time [s] & Ratio &
Max ratio & Min ratio\\ \hline
de Boor-Cox & 3454.94 & 4.73 & 6.12 & 2.44\\
new method & 730.98 & --- & --- & ---
\end{tabular}\\[2ex]
\renewcommand{\arraystretch}{1}
\vspace{2ex}
\caption{Statistics for Example~\ref{Ex:BSplineSurf}.
The source code in C which was used to perform the tests is available at
\url{https://www.ii.uni.wroc.pl/~pwo/programs/BSpline-BF-Surf.c}.}\label{T:TableBS2Surf}
\end{center}
\end{table*}

Additionally, note that if the Bernstein-B\'{e}zier coefficients of the B-spline basis functions
$N_{m_1, i}(u; T), N_{m_2, \ell}(w; V)$ are known
(in the intervals $[t_{j_1}, t_{j_1+1})$ and $[v_{j_2}, v_{j_2 + 1})$, respectiely), i.e.,
$$
N_{m_1, i}(u; T) = \sum_{k=0}^{m_1} b_k^{(i,j_1)} B^{m_1}_k(t) \qquad (u \in [t_{j_1}, t_{j_1 + 1}))
$$
and
$$
N_{m_2, \ell}(w; V) = \sum_{k=0}^{m_2} d_k^{(\ell,j_2)} B^{m_2}_k(v) \qquad (w \in [v_{j_2}, v_{j_2 + 1})),
$$
where
$$
t := \dfrac{u - t_{j_1}}{t_{j_1 + 1} - t_{j_1}}, \qquad
v := \dfrac{w - v_{j_2}}{v_{j_2 + 1} - v_{j_2}},
$$
one can convert the tensor product B-spline patch to a tensor product B\'{e}zier surface:
\begin{equation}%\label{E:BSplineSurfSimp}
 \p{S}(u,w) =
 \sum_{k_1=0}^{m_1} \sum_{k_2=0}^{m_2} \p{V}_{k_1, k_2}^{(j_1, j_2)}
 B^{m_1}_{k_1}(t) B^{m_2}_{k_2}(v).
\end{equation}
The points
$$
\p{V}_{k_1, k_2}^{(j_1, j_2)} := \sum_{i=j_1-m_1}^{j_1} \sum_{\ell=j_2-m_2}^{j_2-1} b_{k_1}^{(i,j_1)} d_{k_2}^{(\ell,j_2)} \p{W}_{i \ell}
$$
are the control points of a B\'{e}zier patch
for the domain $[t_{j_1}, t_{j_1 + 1}) \times [v_{j_2}, v_{j_2 + 1})$
(cf.~Theorem~\ref{T:BB-Partition1}).

% After the conversion, one can use the algorithm given in~\cite{WCh2020} to evaluate
% the B\'{e}zier surface in optimal $O(m_1 m_2 d)$ time.
%
% The cost of finding the Bernstein-B\'{e}zier coefficients
% of B-spline functions is $O(n_1 m_1^2 + n_2 m_2^2)$.
% Then, converting every patch to a B\'{e}zier patch
% costs $O(n_1 n_2 m_1^2 m_2^2 d)$ operations.
% Afterwards, if one needs to evaluate a B-spline surface at $N_1 N_2$ points,
% The cost of evaluation would be $O(N_1 N_2 m_1 m_2 d)$.
% In total, thus, the cost is $O((n_1 n_2 m_1 m_2 + N_1 N_2) m_1 m_2 d)$.
%If $N_1 N_2 > n_1 n_2 m_1 m_2$, the method is asymptotically optimal.

%%%%%%%%%%%%%%%%%%%%%%%%%%%%%%%%%%%%%%%%%%%%%%%%%%%%%%%%%%%%%%%%%%%%%%%%%%%%%%
%%%%%%%%%%%%%%%%%%%%%%%%%%%%%%%%%%%%%%%%%%%%%%%%%%%%%%%%%%%%%%%%%%%%%%%%%%%%%%
\section{Generalizations}                                 \label{S:BSplineGen}
%%%%%%%%%%%%%%%%%%%%%%%%%%%%%%%%%%%%%%%%%%%%%%%%%%%%%%%%%%%%%%%%%%%%%%%%%%%%%%
%%%%%%%%%%%%%%%%%%%%%%%%%%%%%%%%%%%%%%%%%%%%%%%%%%%%%%%%%%%%%%%%%%%%%%%%%%%%%%

The approach presented in Section~\ref{S:BSplineBezier}
can be generalized so that the
inner knots may have their multiplicity higher than $1$ or the boundary knots 
are of multiplicity lower than $m+1$.

%%%%%%%%%%%%%%%%%%%%%%%%%%%%%%%%%%%%%%%%%%%%%%%%%%%%%%%%%%%%%%%%%%%%%%%%%%%%%%
\subsection{Inner knots of any multiplicity}
%%%%%%%%%%%%%%%%%%%%%%%%%%%%%%%%%%%%%%%%%%%%%%%%%%%%%%%%%%%%%%%%%%%%%%%%%%%%%%

When an inner knot has multiplicity over $1$, some knot spans $[t_j, t_{j+1})$
($j=0,1,\ldots,n-1$) are empty. It is only necessary to find the B-spline
functions' coefficients over the non-empty knot spans. If there are $n_e$ such
knot spans, one only needs to find $n_e (m+1)^2$ coefficients, and the algorithm
will have $O(n_e m^2)$ complexity. To use the continuity condition, the 
following definition will be useful.
\begin{definition}\label{D:LeftRightNeighbour}
The {\rm{left neighbor}} of a given knot $t_k$ is the knot $t_{\ell}$ if $\ell$
is the largest natural number such that $t_{\ell} < t_k$, i.e., 
$[t_\ell, t_{\ell + 1})$ is non-empty and $t_{\ell+1} = t_k$.

The {\rm{right neighbor}} of a given knot $t_k$ is the knot $t_r$ if $r$ is the
smallest natural number such that $t_k < t_r$, i.e., $[t_{r-1}, t_r)$ is
non-empty and $t_k = t_{r-1}$.
\end{definition}
Note that in the case considered in Section~\ref{S:BSplineBezier}, the right
neighbor of $t_j$ ($j=0,1,\ldots,n-1$) is always $t_{j+1}$.

From Remark~\ref{R:BSplineAssumptions}, it follows that each B-spline function 
is continuous in $(t_0, t_n)$. The only modification then is in the continuity
condition in Eq.~\eqref{E:BSplineStage3}. Let us consider a non-empty knot span
$[t_j, t_{j+1})$ ($j=0,1,\ldots,n-2$). Let $t_r$ be the right neighbor of
$t_{j+1}$, i.e., $t_{r-1} = t_{j+1}$. In this case, the continuity property at
$t_{j+1}$ is
$$
 \sum_{k=0}^m b_k^{(i,j)} B^m_k\Big(\dfrac{t_{j+1} - t_j}
                                          {t_{j+1} - t_j}\Big) =
 \sum_{k=0}^m b_k^{(i,r-1)} B^m_k\Big(\dfrac{t_{j+1} - t_{r-1}}
                                            {t_r - t_{r-1}}\Big),
$$
which simplifies to $b_m^{(i,j)} = b_0^{(i,r-1)}$ (cf.~Eq.~\eqref{E:InitVal}). 
In such case, the recurrence
relation~\eqref{E:BSplineStage3} takes the form
\begin{equation*}% \label{E:BSplineCase1NextBGen}
  \left\{\begin{array}{l}
   b_m^{(i,j)} = b_0^{(i,r-1)},\\
   b_k^{(i,j)} = \dfrac{t_j - t_i}{t_{j+1} - t_i} b_{k+1}^{(i,j)}
               + \dfrac{v_i}{t_{j+1} - t_i} \Big(
                   (t_{j+1} - t_{m+i+2}) b_k^{(i+1,j)}
                 + (t_{m+i+2} - t_j) b_{k+1}^{(i+1,j)} \Big)\\
  \qquad \qquad \qquad \qquad \qquad \qquad \qquad \qquad \qquad
   \qquad \qquad \qquad (k=m-1,m-2,\ldots,0)
 \end{array}\right.
\end{equation*}
(cf.~Eq.~\eqref{E:BSplineV}), where $t_r$ is the right neighbor of $t_{j+1}$,
and $j=n-2,n-3,\ldots,0$, $i=j-1,j-2,\ldots,j-m+1$.
It is thus enough to substitute line 15 of Algorithm~\ref{A:BSplineBezForm}
with
$$
B[j,i,m] \gets B[r-1,i,0]
$$
and to skip the iterations of loops over $j$ in lines 3 and 12
if $t_j = t_{j+1}$.
Example~\ref{Ex:BSplineBezMult} presents this approach.
\begin{example}\label{Ex:BSplineBezMult}
Let us set $m := 3$, $n := 5$. Let the knots be
$$
\begin{array}{c|c|c|c|c|c|c|c|c|c|c|c}
    t_{-3} & t_{-2} & t_{-1} & t_0 & t_1 & t_2 & t_3 & t_4 & t_5 &
    t_6 & t_7 & t_8\\
    \hline
    0 & 0 & 0 & 0 & 3 & 3 & 5 & 9 & 10 & 10 & 10 & 10
\end{array}.
$$
The knot $t_1$ is of multiplicity $2$. To compute the adjusted
Bernstein-B\'{e}zier coefficients of the B-spline functions over $[t_0, t_1)$ 
a continuity condition with the knot span $[t_2, t_3)$ is used, as $t_1 = t_2$.
Figure~\ref{F:BSplineBezSchemeMult} illustrates this approach to computing all
necessary coefficients, analogous to Example~\ref{Ex:BSplineBez}.
\end{example}

\begin{figure*}[ht!]
\centering
\begin{tikzpicture}
\foreach \col in {0,...,7}
 {
 \fill[black!20!white] (0, \col) -- (\col, \col) -- (\col, \col + 1) --
                       (0, \col + 1) -- (0, \col);
 \draw[pattern=north east lines] (\col, \col) -- (\col + 1, \col) --
                      (\col + 1, \col + 1) -- (\col, \col + 1) -- (\col, \col);
 \fill[black!20!white] (\col + 4, \col) -- (11, \col) -- (11, \col + 1) --
                       (\col + 4, \col + 1) -- (\col + 4, \col);
 \draw[pattern=horizontal lines] (\col+3, \col) -- (\col+4, \col) --
                      (\col+4, \col+1) -- (\col+3, \col+1) -- (\col+3, \col);
 }
 \fill[black!30!white] (0, 0) -- (3, 0) -- (3, 8) -- (0, 8) -- (0, 0);
 \fill[black!30!white] (8, 0) -- (11, 0) -- (11, 8) -- (8, 8) -- (8, 0);
 \fill[black!30!white] (4,0) -- (5,0) -- (5,8) -- (4,8) -- (4,0);
%  \draw[pattern=dots] (7,6) -- (8,6) -- (8,7) -- (7,7) -- (7,6);

 \foreach \row in {0,...,11}
 \draw (\row, 0) -- (\row, 8);
\foreach \col in {0,...,8}
 \draw (0, \col) -- (11, \col);

 \node[left] at (0, 0.5) {$N_{3, -3}(u)$};
 \node[left] at (0, 1.5) {$N_{3, -2}(u)$};
 \node[left] at (0, 2.5) {$N_{3, -1}(u)$};
 \node[left] at (0, 3.5) {$N_{3, 0}(u)$};
 \node[left] at (0, 4.5) {$N_{3, 1}(u)$};
 \node[left] at (0, 5.5) {$N_{3, 2}(u)$};
 \node[left] at (0, 6.5) {$N_{3, 3}(u)$};
 \node[left] at (0, 7.5) {$N_{3, 4}(u)$};

 \node[below] at (0,0) {$0$};
 \node[below] at (1,0) {$0$};
 \node[below] at (2,0) {$0$};
 \node[below] at (3, 0) {$0$};
 \node[below] at (4,0) {$3$};
 \node[below] at (5,0) {$3$};
 \node[below] at (6,0) {$5$};
 \node[below] at (7,0) {$9$};
 \node[below] at (8,0) {$10$};
 \node[below] at (9,0) {$10$};
 \node[below] at (10,0) {$10$};
 \node[below] at (11, 0) {$10$};

 \foreach \row in {-3,-2, -1, 1, 2, 3, 4, 6, 7, 8}
  {
    \node[above] at (\row + 3, 8) {$t_{\row}$};
  }

 \draw[very thick] (3, 0) -- (8, 0) -- (8, 8) -- (3, 8) -- (3, 0);
 \node[above] at (3, 8) {$t_{0}$};
 \node[above] at (8, 8) {$t_{5}$};

 \foreach \row in {3.5, 5.5, 6.5}
 {
   \draw[->] (\row, \row - 0.1) -- (\row, \row - 0.9);
 }
 \foreach \row in {3.5, 5.5, 6.5, 7.5}
 {
   \draw[->] (\row, \row - 1.1) -- (\row, \row - 1.9);
 }
 \draw[->] (5.4, 2.5) -- (3.6, 2.5);
 \draw[->] (5.4, 1.5) -- (3.6, 1.5);
 \foreach \row in {5.5, 6.5}
 {
   \draw[->] (\row + 0.9, \row - 1) -- (\row + 0.1, \row - 1);
   \draw[->] (\row + 0.9, \row - 2) -- (\row + 0.1, \row - 2);
   %\draw[->] (\row + 0.9, \row - 3) -- (\row + 0.1, \row - 3);
 }
 %%\draw[->] (11.5, 4.5) -- (7.6, 4.5);
 %\draw[->] (11.25, 5.5) -- (7.6, 5.5);
 %\draw[->] (11.25, 6.5) -- (7.6, 6.5);
 \draw[->] (9.2, 5.5) -- (7.6, 5.5);
 \draw[->] (9.2, 6.5) -- (7.6, 6.5);
  \draw[->] (7.5, 7.4) -- (7.5, 6.6);

 %%\node[right] at (11.5, 4.5) {$N_{3,1}(10) = 0$};
 %\node[right] at (11.25, 5.5) {$N_{3,2}(10) = 0$};
 %\node[right] at (11.25, 6.5) {$N_{3,3}(10) = 0$};
 \fill[white] (8.3, 5.25) -- (10.7, 5.25) -- (10.7, 5.8) -- (8.3, 5.8) -- 
                                                            (8.3, 5.25); % RAMKA
 \draw (8.3, 5.25) -- (10.7, 5.25) -- (10.7, 5.8) -- (8.3, 5.8) -- (8.3, 5.25); 
                                                                         % RAMKA
 \node[right] at (8.4, 5.5) {$N_{3,2}(10) = 0$};
 \fill[white] (8.3, 6.25) -- (10.7, 6.25) -- (10.7, 6.8) -- (8.3, 6.8) -- 
                                                            (8.3, 6.25); % RAMKA
 \draw (8.3, 6.25) -- (10.7, 6.25) -- (10.7, 6.8) -- (8.3, 6.8) -- (8.3, 6.25); 
                                                                         % RAMKA
 \node[right] at (8.4, 6.5) {$N_{3,3}(10) = 0$};
\end{tikzpicture}
\caption{An illustration of Example~\ref{Ex:BSplineBezMult}.}
\label{F:BSplineBezSchemeMult}
\end{figure*}

%%%%%%%%%%%%%%%%%%%%%%%%%%%%%%%%%%%%%%%%%%%%%%%%%%%%%%%%%%%%%%%%%%%%%%%%%%%%%%
\subsection{Boundary knots of multiplicity lower than 
                                                  \texorpdfstring{$m+1$}{m+1}}
%%%%%%%%%%%%%%%%%%%%%%%%%%%%%%%%%%%%%%%%%%%%%%%%%%%%%%%%%%%%%%%%%%%%%%%%%%%%%%

First, note that in Section~\ref{S:BSplineBezier}, only the assumption that
$t_n = t_{n+m}$ is used, therefore if that condition holds,
Theorem~\ref{T:BSplineBezierBig} and Algorithm~\ref{A:BSplineBezForm} still
apply, regardless of the multiplicity of boundary knots
$t_{-m}, t_{-m+1}, \ldots, t_0$.

If the boundary knot $t_n$ has multiplicity lower than $m+1$ (i.e., the knot sequence is unclamped),
the problem can
be reduced so that it can be solved using Theorem~\ref{T:BSplineBezierBig}.
Its drawback, however, is higher complexity.

The idea is to \textit{inflate} the multiplicity of $t_{n+m}$ up to $m+1$. More
precisely, let $t_{n+m-\ell-1} < t_{n+m-\ell} = t_{n+m}$, i.e., $t_{n+m-\ell}$ 
has multiplicity $\ell + 1$. Let the $m-\ell$ new knots
$t_{n+m+1} = t_{n+m+2} = \ldots = t_{n+2m-\ell}$ be defined so that 
$t_{n+m} = t_{n+m+1}$. This allows to execute Algorithm~\ref{A:BSplineBezForm}
with the new arguments $n_1 := n+m-\ell$, $m_1 := m$ and
the \textit{inflated} knot sequence
$$
 \underbrace{t_{-m} \leq \ldots \leq t_{-1} \leq t_0}_\text{boundary knots}
 \leq
 \underbrace{t_1 < \ldots < t_{n+m-\ell-1}}_\text{inner knots}
 <
 \underbrace{t_{n+m-\ell} = t_{n+m-\ell+1} = \ldots =
             t_{n+2m-\ell}}_\text{boundary knots}.
$$
It remains then to return the coefficients of $N_{mi}$ over $[t_j, t_{j+1})$
for $j=0,1,\ldots,n-1$ and $i=j-m,j-m+1,\ldots,j$. This approach requires the
computation of $O((n+m-\ell)m^2)$ coefficients and is presented in
Example~\ref{Ex:BSplineBezExt}.
\begin{example}\label{Ex:BSplineBezExt}
Let us set $m := 3$, $n := 2$. Let the knots be
$$
\begin{array}{c|c|c|c|c|c|c|c|c}
    t_{-3} & t_{-2} & t_{-1} & t_0 & t_1 & t_2 & t_3 & t_4 & t_5 \\
    \hline
    -3 & -2 & -1 & 0 & 1 & 2 & 3 & 4 & 5
\end{array}.
$$
After adding the knots $t_6=t_7=t_8$ such that $t_5 = t_8$ (thus increasing $n$
by $3$), the problem takes the form
$$
\begin{array}{c|c|c|c|c|c|c|c|c|c|c|c}
    t_{-3} & t_{-2} & t_{-1} & t_0 & t_1 & t_2 & t_3 & t_4 & t_5 &
    t_6 & t_7 & t_8 \\
    \hline
    -3 & -2 & -1 & 0 & 1 & 2 & 3 & 4 & 5 & 5 & 5 & 5
\end{array}.
$$
Figure~\ref{F:BSplineBezSchemeExt} illustrates the application of
Algorithm~\ref{A:BSplineBezForm} (cf.~Example~\ref{Ex:BSplineBez}) computing all
the adjusted Bernstein-B\'{e}zier coefficients of the inflated problem. The
coefficients which are relevant to the solution of the primary problem are in 
the frame drawn in bold.
\end{example}

\begin{figure*}[ht!]
\centering
\begin{tikzpicture}
\foreach \col in {0,...,7}
 {
 \fill[black!20!white] (0, \col) -- (\col, \col) -- (\col, \col + 1) --
                       (0, \col + 1) -- (0, \col);
 \draw[pattern=north east lines] (\col, \col) -- (\col + 1, \col) --
                      (\col + 1, \col + 1) -- (\col, \col + 1) -- (\col, \col);
 \fill[black!20!white] (\col + 4, \col) -- (11, \col) -- (11, \col + 1) --
                       (\col + 4, \col + 1) -- (\col + 4, \col);
 \draw[pattern=horizontal lines] (\col+3, \col) -- (\col+4, \col) --
                      (\col+4, \col+1) -- (\col+3, \col+1) -- (\col+3, \col);
 }
 \fill[black!30!white] (0, 0) -- (3, 0) -- (3, 8) -- (0, 8) -- (0, 0);
 \fill[black!30!white] (8, 0) -- (11, 0) -- (11, 8) -- (8, 8) -- (8, 0);

 \foreach \row in {0,...,11}
 \draw (\row, 0) -- (\row, 8);
\foreach \col in {0,...,8}
 \draw (0, \col) -- (11, \col);

 \node[left] at (0, 0.5) {$N_{3, -3}(u)$};
 \node[left] at (0, 1.5) {$N_{3, -2}(u)$};
 \node[left] at (0, 2.5) {$N_{3, -1}(u)$};
 \node[left] at (0, 3.5) {$N_{3, 0}(u)$};
 \node[left] at (0, 4.5) {$N_{3, 1}(u)$};
 \node[left] at (0, 5.5) {$N_{3, 2}(u)$};
 \node[left] at (0, 6.5) {$N_{3, 3}(u)$};
 \node[left] at (0, 7.5) {$N_{3, 4}(u)$};

 \node[below] at (0,0) {$-3$};
 \node[below] at (1,0) {$-2$};
 \node[below] at (2,0) {$-1$};
 \node[below] at (3, 0) {$0$};
 \node[below] at (4,0) {$1$};
 \node[below] at (5,0) {$2$};
 \node[below] at (6,0) {$3$};
 \node[below] at (7,0) {$4$};
 \node[below] at (8,0) {$5$};
 \node[below] at (9,0) {$5$};
 \node[below] at (10,0) {$5$};
 \node[below] at (11, 0) {$5$};

 \foreach \row in {-3,-2, -1, 1, 3, 4, 5, 6, 7, 8}
  {
    \node[above] at (\row + 3, 8) {$t_{\row}$};
  }

 \draw[very thick] (3, 0) -- (5, 0) -- (5, 5) -- (3, 5) -- (3, 0);
 \node[above] at (3, 8) {$t_{0}$};
 \node[above] at (5, 8) {$t_{2}$};

 \foreach \row in {3.5, 4.5, 5.5, 6.5}
 {
   \draw[->] (\row, \row - 0.1) -- (\row, \row - 0.9);
 }
 \foreach \row in {3.5, 4.5, 5.5, 6.5, 7.5}
 {
   \draw[->] (\row, \row - 1.1) -- (\row, \row - 1.9);
 }
 \draw[->] (4.4, 2.5) -- (3.6, 2.5);
 \draw[->] (4.4, 1.5) -- (3.6, 1.5);
 \foreach \row in {3.5, 4.5, 5.5, 6.5}
 {
   \draw[->] (\row + 0.9, \row - 1) -- (\row + 0.1, \row - 1);
   \draw[->] (\row + 0.9, \row - 2) -- (\row + 0.1, \row - 2);
   %\draw[->] (\row + 0.9, \row - 3) -- (\row + 0.1, \row - 3);
 }
 %%\draw[->] (11.5, 4.5) -- (7.6, 4.5);
 %\draw[->] (11.5, 5.5) -- (7.6, 5.5);
 %\draw[->] (11.5, 6.5) -- (7.6, 6.5);
 \draw[->] (9.2, 5.5) -- (7.6, 5.5);
 \draw[->] (9.2, 6.5) -- (7.6, 6.5);
  \draw[->] (7.5, 7.4) -- (7.5, 6.6);

 %\node[right] at (11.5, 5.5) {$N_{3,2}(5) = 0$};
 %\node[right] at (11.5, 6.5) {$N_{3,3}(5) = 0$};
 \fill[white] (8.3, 5.25) -- (10.7, 5.25) -- (10.7, 5.8) -- (8.3, 5.8) -- 
                                                            (8.3, 5.25); % RAMKA
 \draw (8.3, 5.25) -- (10.7, 5.25) -- (10.7, 5.8) -- (8.3, 5.8) -- (8.3, 5.25); 
                                                                         % RAMKA
 \node[right] at (8.4, 5.5) {$N_{3,2}(5) = 0$};
 \fill[white] (8.3, 6.25) -- (10.7, 6.25) -- (10.7, 6.8) -- (8.3, 6.8) -- 
                                                            (8.3, 6.25); % RAMKA
 \draw (8.3, 6.25) -- (10.7, 6.25) -- (10.7, 6.8) -- (8.3, 6.8) -- (8.3, 6.25); 
                                                                         % RAMKA
 \node[right] at (8.4, 6.5) {$N_{3,3}(5) = 0$};
\end{tikzpicture}
\caption{An illustration of Example~\ref{Ex:BSplineBezExt}.}
\label{F:BSplineBezSchemeExt}
\end{figure*}
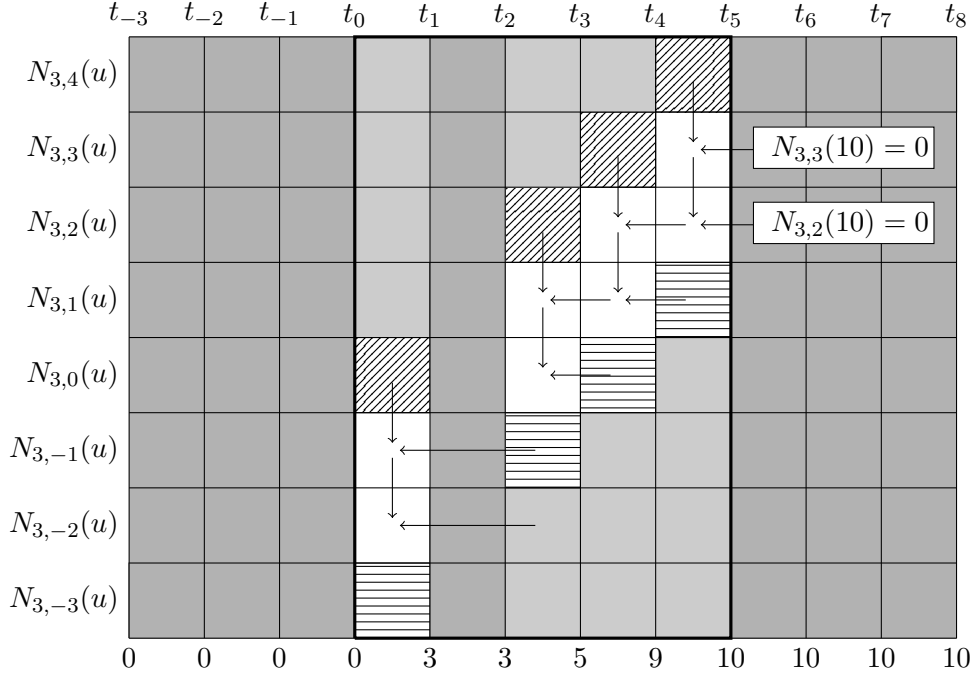

\section{Special case: uniform knots}\label{S:UniformKnots}
In the case of uniform knots, i.e.,
$$t_i := t_0 + i \cdot h \qquad (h > 0,\, i=-1, -m+1,\ldots,n+n),$$
one can check that
$$
N_{mi}([t_j, t_{j+1})) \equiv N_{m,i+1}([t_{j+1}, t_{j+2})).
$$
This means that, to solve Problem~\ref{P:BSpline1}, it is enough to find
the coefficients of B-spline basis functions over one knot span.

Let us set $j \in \mathbb{N}$ such that $0 \leq j \leq n - 1$.
Equation~\eqref{E:BSplineStage1Exp} takes the form
\begin{equation}\label{E:BSplineUniLast}
\left\{\begin{array}{ll}
  b_k^{(j,j)} = 0 & (k = 0,1,\ldots,m-1),\\
  b_m^{(j,j)} = \dfrac{1}{m!},&
\end{array}\right.
\end{equation}
while Eq.~\eqref{E:BSplineStage1LastExp} simplifies to
\begin{equation}\label{E:BSplineUniFirst}
\left\{\begin{array}{ll}
  b_0^{(j-m,j)} = \dfrac{1}{m!},&\\
  b_k^{(j-m,j)} = 0 & (k = 1,2,\ldots,m).
\end{array}\right.
\end{equation}

In order to find the coefficients of the basis functions
$N_{mi}$ for $i=j-m+1,j-m+2,\ldots,j-1$, one can use Theorem~\ref{T:BSplineBezCase1},
which gives $m$ equations of the form
\begin{equation}\label{E:BSplineBezCase1ThUni}
(j+1 - i) b^{(i,j)}_k + (i - j) b^{(i,j)}_{k+1} =
(j-m-i-1) b^{(i+1,j)}_k + (m+i+2-j) b^{(i+1,j)}_{k+1}
                                                    \qquad (k=0,1,\ldots,m-1).
\end{equation}
To complete the recurrence scheme, it is enough to use the continuity condition
$$
N_{m,i+1}(t_{j+1}^-) = N_{m,i+1}(t_{j+1}^+),
$$
which, due to the knot uniformity, can be formulated as
$$
N_{m,i+1}(t_{j+1}^-) = N_{mi}(t_{j}^+),
$$
which, eventually, gives
$$
b_0^{(i+1,j)} = b_m^{(i,j)}.
$$
The complete recurrence scheme is thus,
for $i = j - 1, j - 2, \ldots, j - m + 1$,
\begin{equation}\label{E:BSplineUniScheme}
\left\{\begin{array}{l}
  b_m^{(i,j)} = b_0^{(i+1,j)},\\
  b^{(i,j)}_k  = \dfrac{j - i}{j - i + 1} b^{(i,j)}_{k+1}
               +  \dfrac{j - i - m - 1}{j - i + 1} b^{(i+1,j)}_k
               + \dfrac{m + i + 2 - j}{j - i + 1} b^{(i+1,j)}_{k+1} \\
\qquad \qquad \qquad \qquad \qquad \qquad \qquad \qquad \qquad \qquad \qquad \qquad (k=m-1, m-2,\ldots,0).
\end{array}\right.
\end{equation}
Note that both the coefficients of B-spline basis functions
given in Eqs.~\eqref{E:BSplineUniLast} and~\eqref{E:BSplineUniFirst},
as well as the coefficients in
the recurrence scheme~\eqref{E:BSplineUniScheme} are rational.
This means that all the coefficients of the B-spline basis functions
are thus rational and can be computed without errors.
This approach requires $O(m^2)$ operations, which is optimal.

\section{Conclusion}
We have discovered a new differential-recurrence relation satisfied by B-spline functions
of the same degree.
The relation is a foundation of a new asymptotically optimal method of finding
the Bernstein-B\'{e}zier coefficents of all B-spline basis functions in the clamped case
over all knot spans.
The algorithm can be generalized for different knot multiplicities,
including the unclamped case.
The new method allows to accelerate the computations of B-spline basis functions,
which leads to faster evaluation of B-spline curves and surfaces.
Numerical experiments show that the algorithm is stable.
Further research is required to find the application of
the new differential-recurrence relation in finding
the Bernstein-B\'{e}zier coefficients of
B-spline basis functions over a single knot span.

%%%%%%%%%%%%%%%%%%%%%%%%%%%%%%%%%%%%%%%%%%%%%%%%%%%%%%%%%%%%%%%%%%%%%%%%%%%%%%
%%%%%%%%%%%%%%%%%%%%%%%%%%%%%%%%%%%%%%%%%%%%%%%%%%%%%%%%%%%%%%%%%%%%%%%%%%%%%%
%\newpage
\bibliographystyle{elsart-num-sort}
\bibliography{BSplineBF-elsart}
%%%%%%%%%%%%%%%%%%%%%%%%%%%%%%%%%%%%%%%%%%%%%%%%%%%%%%%%%%%%%%%%%%%%%%%%%%%%%%
%%%%%%%%%%%%%%%%%%%%%%%%%%%%%%%%%%%%%%%%%%%%%%%%%%%%%%%%%%%%%%%%%%%%%%%%%%%%%%

\end{document}